\documentclass[a4paper,11pt]{amsart}

\usepackage{cite}
\usepackage{algpseudocode}
\usepackage{amsmath, amsthm}
\usepackage{textcomp}
\usepackage{amssymb}
\usepackage[all]{xy}
\usepackage{enumerate}
\usepackage{fancyhdr}
\usepackage{lscape}
\usepackage{url}
\usepackage{setspace}
\usepackage{longtable}
\usepackage{tikz}
\usepackage{wasysym}

\usepackage[labelsep=endash]{caption}

\theoremstyle{plain}
\newtheorem{theorem}{Theorem}[section]
\newtheorem{lm}[theorem]{Lemma}
\newtheorem{prop}[theorem]{Proposition}
\newtheorem{cor}[theorem]{Corollary}

\theoremstyle{definition}
\newtheorem{defi}[theorem]{Definition}
\newtheorem{conj}[theorem]{Conjecture}
\newtheorem{ex}[theorem]{Example}

\newtheorem{notation}[theorem]{Notation}

\theoremstyle{remark}
\newtheorem{rmk}{Remark}

\setlength{\topmargin}{0cm}
\setlength{\textheight}{23.1cm}
\setlength{\textwidth}{16.5cm}
\setlength{\oddsidemargin}{-0.2cm}
\setlength{\evensidemargin}{-0.2cm}

\DeclareMathOperator{\Pic}{Pic}
\DeclareMathOperator{\Int}{Int}

\DeclareMathOperator{\Proj}{Proj}
\DeclareMathOperator{\Spec}{Spec}
\DeclareMathOperator{\rank}{rank}
\DeclareMathOperator{\Hom}{Hom}
\DeclareMathOperator{\Cl}{Cl}

\DeclareMathOperator{\Cox}{Cox}

\DeclareMathOperator{\Mob}{Mob}

\DeclareMathOperator{\Sl}{GL}
\DeclareMathOperator{\SL}{SL}

\DeclareMathOperator{\Gl}{GL}
\DeclareMathOperator{\hcf}{gcd}

\newcommand{\F}{\mathcal{F}}

\newcommand{\C}{\mathbb{C}}
\newcommand{\Z}{\mathbb{Z}}
\newcommand{\A}{\mathbb{A}}
\newcommand{\Q}{\mathbb{Q}}

\newcommand{\III}{{\rm III}}
\newcommand{\IV}{{\rm IV}}
\newcommand{\I}{{\rm I}}
\newcommand{\II}{{\rm II}}
\newcommand{\PP}{\mathbb{P}}
\renewcommand{\H}{\text{H}}

\newcommand\qt{{\slash\kern-0.65ex\slash}}

\title{On Pliability of del Pezzo fibrations and Cox rings}

\author{Hamid Ahmadinezhad}
\keywords{Birational Automorphism; Mori Fibre Space; Sarkisov Program; Cox Ring; Variation of Geometric Invariant Theory.}
\subjclass[2010]{14E05, 14E30, 14E07 and 14E08}

\begin{document}

\begin{abstract} We develop some concrete methods to build Sarkisov links, starting from Mori fibre spaces. This is done by studying low rank Cox rings and their properties. As part of this development, we give an algorithm to construct explicitly the coarse moduli space of a toric Deligne-Mumford stack. This can be viewed as the generalisation of the notion of well-formedness for weighted projective spaces to homogeneous coordinate ring of toric varieties. As an illustration, we apply these methods to study birational transformations of certain fibrations of del Pezzo surfaces over $\PP^1$, into other Mori fibre spaces, using Cox rings and variation of geometric invariant theory. We show that the pliability of these Mori fibre spaces is at least three and they are not rational. \end{abstract}

\maketitle

\tableofcontents

\section{Introduction}

The birational classification of algebraic varieties in dimension three saw a major advancement after the completion of the minimal model programme (MMP for short) in the 1980s. Roughly speaking, given an algebraic variety, MMP produces a simpler model that is either a ``Mori fibre space'', starting from a variety with negative Kodaira dimension, or a ``minimal model'' otherwise. The next step is to study relations among these outputs.

Theoretically, any birational map between Mori fibre spaces can be decomposed as a finite sequence of simpler maps, called {\it Sarkisov links} \cite{corti2, hacon-mcK}.

A major aim of this article is to introduce explicit methods of constructing Sarkisov links, starting from a given Mori fibre space. 
The hope is to push the existing methods to be able to detect many families with finite number of different Mori fibre space structures.

The article is organised in two parts. In the first part we develop some techniques to work with low rank Cox rings and their blow ups. In particular, we show how to write down the Cox ring of the (weighted) blow up of a weighted projective space and also the (weighted) blow up of the projectivization of a weighted vector bundle over $\PP^n$. We also introduce a notion of well-formedness for toric Cox rings, which gives rise to an algorithm to construct the coarse moduli space of a toric Deligne-Mumford stack (TDMS).

In the second part, we apply these techniques to study birational geometry of certain families of del Pezzo surfaces, treated as Mori fibre spaces.  Formally, a variety $X$ with at worst $\Q$-factorial terminal singularities is a Mori fibre space if there exists a morphism $\varphi\colon X\rightarrow Z$ to a normal variety $Z$, of strictly smaller dimension than $X$, such that $-K_X$ is $\varphi$-ample and the relative Picard number is equal to one, that is $\rho(X/Z)=1$. It is crucial in the classification of threefolds to investigate how many different Mori fibre spaces fall in the same birational class, and study their properties. Naturally, this problem is considered up to {\it square birational} equivalence.

\begin{defi} Let $X\rightarrow Z$ and $X^\prime\rightarrow Z^\prime$ be Mori fibre spaces. A birational map $f\colon X\dashrightarrow X^\prime$ is {\it square} if it fits into a commutative diagram\begin{center}$\xymatrixcolsep{3pc}\xymatrixrowsep{3pc}
\xymatrix{
X\ar@{-->}^f[r]\ar[d]& X^\prime\ar[d]\\
Z\ar@{-->}[r]^g&Z^\prime
}$\end{center} where $g$ is birational and, in addition, the map $f_L\!\colon\!X_L\!\dashrightarrow\!X^\prime_L$ induced on generic fibres  is biregular. In this case we say that $X/Z$ 
and $X^\prime/Z^\prime$ are {\it square birational}. We denote this by $X/Z\sim X^\prime/Z^\prime$.
\end{defi}

\begin{defi}[\cite{corti-mella}]\label{pliability}The {\it pliability} of a Mori fibre space $X\rightarrow S$ is the set
\[\mathcal{P}(X\slash S)=\{\text{Mfs } Y\rightarrow T\mid X\text{ is birational to } Y\}\slash\sim\] We sometimes use the term pliability to mean the cardinality of this set, when it is finite.\end{defi}

For many Mori fibre spaces this number is known to be very small or the variety is known to be rational, so that $\mathcal{P}(X\slash Z)=\mathcal{P}(\PP^n)$ is infinite. For instance smooth fibrations of del Pezzo surfaces over $\PP^1$ with degree of the general fibre five or above are known to be rational~\cite{isk}. On the opposite (to rationality), there are many examples with pliability 1. For example, it was proved in \cite{CPR} that if $X$ is a quasi-smooth and general hypersurface of degree $d$ in a weighted projective space $\PP(1,a_1,a_2,a_3,a_4)$ and has only terminal singularities then $\mathcal{P}(X)=1$. This is a generalisation of the famous result of Iskovskikh and Manin for smooth quartic threefold \cite{manin}. Many other similar results have been obtained for threefold Mori fibre spaces, in both directions, for threefolds and also higher dimensional varieties (see for example~\cite{defe, vanya-rigid}). An interesting model with pliability exactly two is constructed as a quartic in $\PP^4$ with a $cA_2$ singular point~\cite{corti-mella}.

In this article we consider a del Pezzo fibration of degree 4 over $\PP^1$ and show that its pliability is at least 3. Then using results from \cite{alexeev} we prove it is not rational. We recall that the degree of a del Pezzo surface is the self-intersection number of its canonical class. Birational geometry of del Pezzo fibrations, considered as Mori fibre spaces, plays an important role in the theory of classification of algebraic varieties in dimension 3. While degree 5 or above imply rationality, one expects more rigidity as the degree drops. There are many results of this type on birational rigidity and nonrigidity for degree 1, 2 and 3 fibrations, see for example \cite{Grinenko, Ahm, BCZ, Pukhlikov, Cheltsov}. The rationality problem in degree 4 is studied in \cite{alexeev, shramov}. It is also shown in \cite{alexeev} that these varieties are birational to conic bundles over the base, hence nonrigidity can be deduced from this. We study the birational behaviour of these varieties modulo this manoeuvre. 
 
The construction of the links between various models, through 2-ray games that generate Sarkisov links, are obtained via Cox embeddings. In \cite{BZ} it was shown how to run type $\III$ or $\IV$ Sarkisov links on a rank 2 Cox rings. This method has been applied to many classes of Mori fibre spaces with Picard number 2, see \cite{BCZ, Ahm}. Some of the models we construct are obtained in this fashion and for others we introduce more general methods of working with higher rank Cox rings and also explain what the maps between these Cox rings look like. In other words, these constructions provide explicit Sarkisov links of type $\I$ or $\II$ in terms of Cox data. Working on a particular case, as an illustration, we obtain the following.

\begin{theorem}\label{main-theorem} Let $\F$ be the five-fold $\F=\Proj_{\PP^1}\mathcal{E}$, where $\mathcal{E}=\mathcal{O}_{\PP^1}\oplus\mathcal{O}_{\PP^1}(1)\oplus\mathcal{O}_{\PP^1}(2)\oplus\mathcal{O}_{\PP^1}(3)\oplus\mathcal{O}_{\PP^1}(3)$. Let $M$ be the class of the tautological bundle on $\F$ and $L$ the class of a fibre (over $\PP^1$). If   hypersurfaces $Q_1\in|-3L+2M|$ and $Q_2\in|-2L+2M|$ in $\F$ are general such that $X=Q_1\cap Q_2$ is a smooth complete intersection in $\F$ then
\begin{enumerate}[(1)]
\item  $\Pic(X)\cong\Z^2$ and $X\rightarrow\PP^1$ is a Mori fibre space with generic fibre del Pezzo surface of degree 4,
\item there exist at least two non-trivial Sarkisov links from $X/\PP^1$ to other Mori fibre spaces,
\item $X$ is not rational.
\end{enumerate}
\end{theorem}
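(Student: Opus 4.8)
The plan is to treat the three assertions separately: (1) follows from standard facts about complete intersections in the toric ambient $\F$, (2) from the Cox-ring and VGIT machinery developed in the first part of the paper, and (3) by feeding the resulting conic bundle structure into Alexeev's criterion. For (1) I would first record the Cox ring of $\F$: it is the polynomial ring $\C[u,v,y_0,\dots,y_4]$ carrying the $\Z^2$-grading in which the base coordinates $u,v$ have degree $L$ and the fibre coordinates $y_i$ have degree $M$ twisted by $-a_i L$ (with $(a_0,\dots,a_4)=(0,1,2,3,3)$), so that $\Pic(\F)\cong\Z M\oplus\Z L$. Since $Q_1,Q_2$ have bidegrees $2M-3L$ and $2M-2L$, a Bertini argument gives smoothness of the general $X$, and because $\dim X=3\geq 3$ a Lefschetz-type theorem for the complete intersection $X\subset\F$ forces the restriction $\Pic(\F)\to\Pic(X)$ to be an isomorphism, giving $\Pic(X)\cong\Z^2$. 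Restricting $M$ and $L$ to a general fibre $\PP^4$ of $\F\to\PP^1$ shows $Q_1,Q_2$ cut out the intersection of two quadrics, i.e.\ a del Pezzo surface of degree $4$; the relative Picard number is $\rho(X/\PP^1)=\rho(X)-1=1$, and since the $M$-coefficient of $-K_X$ is $5-2-2=1$, so that $-K_X=(M+cL)|_X$ for a constant $c$ computed from $\det\mathcal{E}=\O(9)$ and $\rank\mathcal{E}=5$, the class $-K_X$ restricts to $\O(1)$ on every fibre and is therefore relatively ample. Hence $X\to\PP^1$ is a Mori fibre space.

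For (2) I would exploit the two complementary constructions advertised in the introduction. The rank-$2$ Cox ring of $X$ sits inside that of $\F$, and the nef/movable cone of $\F$ is a two-dimensional cone whose boundary rays give the fibration $\F\to\PP^1$ (the $L$-ray) and a second toric contraction at the opposite end. Running the ambient $2$-ray game and restricting to a general $X$ --- checking that $X$ meets the unstable loci transversally and that the small or divisorial ambient modifications induce the corresponding modifications on $X$ --- produces a Sarkisov link of type $\III$ or $\IV$ to a second Mori fibre space $Y_1$, exactly as in \cite{BZ, BCZ, Ahm}. To obtain a second, genuinely different link I would instead perform a suitable (weighted) blow-up of $X$ along the curve or fibre-degeneration locus where the rank-$2$ game would otherwise stall, raising the Picard rank to $3$; using the formula for the Cox ring of such a blow-up established in the first part, I would then run a $2$-ray game on this higher-rank Cox ring and read off a Sarkisov link of type $\I$ or $\II$ to a third Mori fibre space $Y_2$. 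Non-triviality of both links, and the fact that $Y_1$, $Y_2$ and $X/\PP^1$ are pairwise non square-equivalent, I would verify by comparing discrete invariants of the three fibrations (the base, the fibre type, and the behaviour over the degenerate fibres), so that $\mathcal{P}(X/\PP^1)\geq 3$.

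For (3) I would use that a degree-$4$ del Pezzo fibration over $\PP^1$ is birational to a standard conic bundle over a rational surface, as shown in \cite{alexeev}, and then invoke Alexeev's numerical non-rationality criterion. Concretely, from the explicit equations of $X$ I would compute the relevant discriminant data of the associated conic bundle and check that it violates the condition for rationality; this is a direct, if computational, application once the conic bundle model is in hand.

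The hard part will be (2): making the ambient VGIT wall-crossings descend to honest Sarkisov links on $X$. The delicate points are verifying that the general $X$ avoids the base loci and crosses the unstable walls transversally (so that the restricted maps are small flips or flops, or honest divisorial contractions, with terminal $\Q$-factorial targets), correctly identifying the geometry over the special points of $\PP^1$ where the del Pezzo fibres degenerate, and confirming that the resulting $Y_1$ and $Y_2$ are themselves Mori fibre spaces distinct from $X/\PP^1$. The bookkeeping of gradings and chamber structure is routine given the methods of the first part, but the transversality and terminality checks on $X$ are where the real work lies.
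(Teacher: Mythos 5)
Your overall architecture (Bertini plus Lefschetz for (1), one rank-$2$ link and one blow-up link for (2), Alexeev for (3)) is the right shape, but two steps as written would fail. First, in (1) you cannot invoke a Lefschetz-type theorem for $X\subset\F$ directly: the classes $2M-3L$ and $2M-2L$ of $Q_1$ and $Q_2$ lie outside the ample cone of $\F$ (which is spanned by $L$ and $M$), so Grothendieck--Lefschetz does not apply to the complete intersection in $\F$. The paper has to work around exactly this point: $Q_1$ \emph{is} ample on the small modification $\F_2$ obtained by two wall crossings of the $2$-ray game, which gives $\Pic(Q_1)\cong\Z^2$; and $X$ is not ample on $Q_1$ or on any of its small modifications $Q_1^i$, but it is nef and big on $Q_1^1$ and $Q_1^2$, and only a Lefschetz-type statement for nef and big divisors (as in \cite[\S4.3]{Ahm}) closes the argument. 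Your one-line appeal to ``$\dim X=3\geq 3$'' is a genuine gap.

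Second, in (2) your second link is aimed at the wrong variety. Since $X$ is smooth there is no distinguished Kawamata-type centre on it, and you propose to blow up an unspecified ``curve or fibre-degeneration locus'' of $X$; nothing guarantees this initiates a link. In the paper the second link does not start from $X$ at all: the first link $X\dashrightarrow Y$ (an anti-flip of type $(1,1,-1,-3)$ followed by an Atiyah flop, landing in a $dP_2$ fibration $Y/\PP^1$) produces a terminal quotient point $p_{vz}\in Y$ of type $\frac{1}{3}(1,1,2)$, and it is the Kawamata blow up of \emph{that} point, followed by a flop of six lines and a divisorial contraction, that yields the third model $Y'$ --- a second $dP_2/\PP^1$ fibration square birational to $Y$ which moreover carries a conic bundle structure over $\PP^2$ (quoted from \cite{Ahm}). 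So the three elements of the pliability set are $X/\PP^1$, $Y/\PP^1\sim Y'/\PP^1$, and $Y'/\PP^2$, distinguished by fibre degree and base exactly as you suggest. Finally, for (3) your detour through the discriminant of the conic bundle is unnecessary and left unspecified: the paper simply computes $\chi(X)=-28$ and applies Alexeev's criterion that a standard $dP_4$ fibration over $\PP^1$ with $\chi\notin\{0,-4,-8\}$ is irrational; you should check the hypotheses ``standard'' (Picard number two, normal fibres) rather than reconstruct Shokurov's conic bundle condition.
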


The varieties and the maps appearing in Theorem\,\ref{main-theorem} can be seen in the Diagram\,\ref{easy-diag}. We refer to Section\,\ref{proofs} and Diagram\,\ref{main-diag} for details.

{\small
\begin{table}[h]\centering 

\xymatrixcolsep{4pc}\xymatrixrowsep{4pc}
\xymatrix{
&&\mathcal{Y}\ar_{\text{blow up}}[ld]\ar@{-->}^{\text{ISO in}}_{\text{codim. }1}[r]&\mathcal{Y}^\prime\ar^{\text{blow up}}[rd]&&\\
X\ar@{-->}^{\text{ISO in}}_{\text{codim. }1}[r]\ar_{dP_4\text{ fibration}}[d]&Y\ar_{dP_2\text{ fibration}}[rd]&&&Y'\ar_{dP_2\text{ fibration}}[ld]\ar^{\simeq}[r]&Y'\ar_{\text{conic}}^{\text{ bundle}}[d]\\
\PP^1&&\PP^1\ar@{-->}^{\simeq}[r]&\PP^1&&\PP^2
}
\caption{}\label{easy-diag}
\end{table} 
}
In this diagram, by ``ISO in codim.\,$1$'' we mean isomorphism in codimension one and ``$dP_n$ fibration'' stands for a fibration of Mori type with fibres del Pezzo surfaces of degree $n$, and the symbol $\simeq$ denotes an isomorphism. The varieties $Y$ and $Y'$ are non-isomorphic but square birational, over $\PP^1$.

This result, and some experience in working with del Pezzo fibrations, lead to the following natural conjecture. In fact, the  relation between the condition that $-K_X\notin\Int(\overline{\Mob(X)})$ and absence of birational maps (Sarkisov links of type $\III$ or $\IV$ to be precise) has been observed before (see for example \cite{BCZ, Grinenko1, Grinenko2}). This conjecture serves as an analogue for del Pezzo fibrations in degree\,$4$.

\begin{conj} Let $X$ be a  smooth threefold Mori fibre space over $\PP^1$, with fibres being smooth del Pezzo surfaces of degree $4$. Then, except the birational maps obtained by blowing up a section, there is no Sarkisov link starting from $X$ to another Mori fibre space if and only if $-K_X\notin\Int(\overline{\Mob(X)})$.
\end{conj}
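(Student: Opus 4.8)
The plan is to pass to the rank-two picture and read both implications off the chamber structure of $N^1(X)_\R$. Since $X$ is a smooth del Pezzo fibration of degree $4$ over $\PP^1$ we have $\rho(X/\PP^1)=1$, hence $\rho(X)=2$ and $N^1(X)_\R\cong\R^2$; the fibre class $L$ is a common boundary ray of the nested cones $\operatorname{Nef}(X)\subseteq\overline{\Mob(X)}\subseteq\overline{\operatorname{Eff}(X)}$. First I would fix generators $L,M$ with $M$ relatively ample (as for the five-fold $\F$ in Theorem \ref{main-theorem}), compute $-K_X$ by adjunction, and identify the \emph{second} boundary ray $m$ of $\overline{\Mob(X)}$, namely the wall at which the last movable model of $X$ acquires a non-isomorphic contraction. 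Because $-K_X$ is $\varphi$-ample for $\varphi\colon X\to\PP^1$, it is never proportional to $L$, so the only genuine dichotomy is whether $-K_X$ lies strictly between $L$ and $m$ (the case $-K_X\in\Int(\overline{\Mob(X)})$) or on/beyond $m$ (the case $-K_X\notin\Int(\overline{\Mob(X)})$).

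For the implication $-K_X\in\Int(\overline{\Mob(X)})\Rightarrow$ existence of a non-trivial link, I would run the two-ray game on $X$ explicitly, exactly as in the proof of Theorem \ref{main-theorem}. Realising $X$ as a complete intersection of two relative quadrics in a rank-two toric $\PP^4$-bundle and crossing the relevant GIT wall via the VGIT of the Cox ring produces an isomorphism in codimension one $X\dashrightarrow X'$, followed at the wall $m$ by a genuine Mori contraction (a degree-$2$ del Pezzo fibration or a conic bundle, as in Diagram \ref{easy-diag}). When $-K_X$ is interior, its image stays relatively ample for the new fibration, so the composite is a Sarkisov link of type \III\ or \IV\ to a Mori fibre space that is manifestly not obtained by blowing up a section. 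This direction is essentially the relative version of the computation already carried out for the main theorem.

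The reverse implication is the substance of the conjecture: assuming $-K_X\notin\Int(\overline{\Mob(X)})$, one must show that \emph{every} Sarkisov link out of $X$, apart from the blow-up of a section, is obstructed. I would split this by link type. For type \III/\IV\ links the two-ray game terminates at the wall $m$ with $-K_X$ non-positive on the contracted ray, so the endpoint is either a divisorial contraction or a fibration on which $-K$ is not relatively ample; hence no new Mori fibre space results. For type \I/\II\ links, which begin with an extremal extraction $Z\to X$, I would invoke the Noether--Fano--Iskovskikh inequality: such a link forces a mobile system $\mathcal{H}\subset|-nK_X|$ whose maximal centre violates the canonical threshold. The position $-K_X\notin\Int(\overline{\Mob(X)})$ should then bound $\mult$ against the intersection numbers of $-K_X$ with the movable boundary tightly enough to force every maximal centre to be a section of $X\to\PP^1$, i.e.\ precisely the excluded case.

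The hard part will be this last step: excluding maximal centres supported at points, at vertical curves, or at multisections, when $-K_X$ lies on or outside the movable boundary. Controlling these requires a classification of the extremal extractions of a smooth degree-$4$ del Pezzo fibration together with a fibrewise analysis of $\mathcal{H}$ restricted to the degree-$4$ fibre, where the method of maximal singularities is delicate. It is exactly this uniform completeness over the whole family that I expect to resist a clean argument, which is why the statement is posed as a conjecture; a plausible route is to combine the $4n^2$-inequality at smooth points with a study of how $\Bs\mathcal{H}$ meets the fibres, so as to upgrade the numerical obstruction from $-K_X\notin\Int(\overline{\Mob(X)})$ into a geometric one.
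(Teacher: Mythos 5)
This statement is posed in the paper as a \emph{conjecture}: the paper contains no proof of it, and your proposal does not supply one either. Your own write-up concedes the decisive point. The reverse implication --- that $-K_X\notin\Int(\overline{\Mob(X)})$ forbids every Sarkisov link other than the blow-up of a section --- is exactly the content of the conjecture, and you reduce it to ``excluding maximal centres supported at points, at vertical curves, or at multisections,'' which you then leave open, remarking that you expect it to resist a clean argument. Invoking the Noether--Fano inequality and the $4n^2$-inequality names the standard toolkit of the method of maximal singularities, but it does not carry out the exclusion; in particular, nothing in your sketch converts the numerical hypothesis on the position of $-K_X$ in $N^1(X)$ into a bound on multiplicities of a mobile system $\mathcal{H}\subset|-nK_X+\text{(fibres)}|$ along an arbitrary centre. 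Until that step is done, no part of the ``only if'' direction is established.

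The forward direction of your sketch also has gaps beyond what you acknowledge. You propose to realise $X$ as a complete intersection of two relative quadrics in a rank-two toric bundle and run the VGIT of its Cox ring, but that presentation is established in the paper only for the particular family of Theorem~\ref{main-theorem}; for an arbitrary smooth degree-$4$ del Pezzo fibration over $\PP^1$ you would first have to prove such an embedding exists and that $X$ is a Mori dream space, neither of which is automatic. Moreover, even granting that the two-ray game can be run, crossing the wall at the second boundary ray of $\overline{\Mob(X)}$ need not produce a Sarkisov link: the small modifications and the final contraction must remain in the Mori category (terminal, $\Q$-factorial), and the endpoint must actually satisfy the Mori fibre space axioms. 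The implication ``$-K_X\in\Int(\overline{\Mob(X)})$ forces a type \III\ or \IV\ link'' is precisely the heuristic observed in the literature the paper cites (and verified there only in examples); asserting it in general is assuming a statement of essentially the same depth as the conjecture itself. So what you have is a reasonable research programme, consistent with how the paper motivates the conjecture, but not a proof of either implication.
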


Let us recall that a divisor is mobile if its linear system has no base component, and $\Mob(X)$ is the convex cone in $N^1(X)$ generated by the classes of mobile divisors. It is easy to check, by computations in Section~\ref{proofs}, that $-K_X\in\Int(\overline{\Mob(X)})$ for the threefold considered in Theorem~\ref{main-theorem}, hence there are other Sarkisov links starting from $X$.

Later in Subsection\,\ref{bl-up-wps}, Example\,\ref{elliptic}., we employ our explicit constructions to recover some elliptic fibrations on a Fano threefold. This is to illustrate that the result of \cite{vanya-jihun}, on classification of elliptic and $K3$ fibrations on Fano threefold hypersurfaces, can be recovered with global methods as was the aim of Ryder\,\cite{Ryder}, who obtained partial results in this direction.

The structure of the article is as follows. In Sections~\ref{stack} we study well-formedness of the homogeneous coordinate ring of toric varieties. In Section~\ref{blow-up} we show how to work with blow ups of low rank Cox rings. Sections~\ref{stack} and \ref{blow-up} can be considered independently of this article and the results are quite general and do not restrict only to the cases studied in this article. Among applications of these methods is the description of the starting point of type $\I$ and $\II$ Sarkisov links, that we apply. Section~\ref{stack} explains how some of the blow up varieties can be modified to simpler ones, isomorphically. This generalizes the notion of well-formedness of weighted projective spaces \cite{fle} to that of Cox rings. Equivalently, it is an explicit method to find Cox rings of coarse moduli of TDMS \cite{fantechi}. In Section \ref{proofs} we describe explicitly the links between varieties under study. Tools provided in earlier sections will be used frequently in the proofs.

All varieties and stacks in this article are projective. Results in Sections \ref{stack} and \ref{blow-up} hold in any field of characteristic zero. In Section \ref{proofs} varieties are considered over the field of complex numbers. 

{\bf Acknowledgement.} The author would like to thank Gavin Brown, Ivan Cheltsov, Alessio Corti, Anne-Sophie Kaloghiros and Francesco Zucconi for useful discussions. Special gratitude goes to the anonymous referees whose valuable remarks helped this paper find its current shape. This work was partially supported by the Austrian Science Fund (FWF): P22766-N18.

\section{Well-formedness and stacky models}\label{stack}
The homogeneous coordinate ring of a toric variety was introduced in \cite{Cox} in order to view toric varieties as certain quotient spaces. It was later generalised to projective varieties by Hu and Keel \cite{hu} and inherited the name {\it Cox ring} in that article. In \cite{borisov}, Borisov, Chen and Smith introduced {\it stacky fans}, to define TDMS, and formulated the quotient construction of these varieties as quotient stacks, following \cite{Cox}. It was also shown how to recover the coarse moduli space of a TDMS, as a toric variety, via the stacky fan. A geometric approach to TDMS was carried out in \cite{fantechi}, and the properties of the quotient and relations to the coarse moduli space was studied in more depth. In this section we show, given the quotient stack corresponding to a TDMS, how to recover its coarse moduli space (as a Cox ring of a toric variety).
It is also shown that this construction is a generalisation of the method of ``well forming'' a weighted projective space (see \cite{fle}).

\subsection{Well-formedness of weighted projective spaces}
Weighted projective spaces have been studied extensively, and the well-formedness property, as in \cite{dol} and \cite{fle} plays an important role in the basic theory.

A weighted projective space, denoted by $\PP(a_0,\dots,a_n)$, for positive integers $a_0,\dots,a_n$,  is defined by the geometric quotient of $\C^{n+1}-\{0\}$ when acted on by $\C^*$ via
\[\lambda.(x_0,\dots,x_n)\mapsto(\lambda^{a_0}x_0,\dots,\lambda^{a_n}x_n),\quad\text{for }\lambda\in\C^*.\]
In other words, $\PP(a_0,\dots,a_n)=\Proj\C[x_0,\dots,x_n]$, where $\C[x_0,\dots,x_n]$ is $\Z$-graded with $\deg(x_i)=a_i$.

The weighted projective space $\PP(a_0,\dots,a_n)$ is {\it well-formed} if $\hcf(a_0,\dots,\hat{a_i},\dots,a_n)=1$ for all $0\leq i\leq n$. The well-formed model of a given quotient $\PP(a_0,\dots,a_n)$ is obtained in two steps:
\begin{enumerate}[Step\,1.]
\item Remove generic stabilisers \cite[Lemma~5.5]{fle}: Find $a=\hcf(a_0,\dots,a_n)$, then replace $\PP(a_0,\dots,a_n)$ by $\PP(b_0,\dots,b_n)$, where $b_i=\frac{a_i}{a}$.
\item Remove quasi-reflections \cite[Lemma~5.7]{fle}: For each $b_i$ find $\mathbf{b_i}=\hcf(b_0,\dots,\hat{b_i},\dots,b_n)$ and replace $\PP(b_0,\dots,b_n)$ by \[\PP(\frac{b_0}{\mathbf{b_i}},\dots,\frac{b_{i-1}}{\mathbf{b_i}},b_i,\frac{b_{i+1}}{\mathbf{b_i}},\dots,\frac{b_n}{\mathbf{b_i}}).\]
\end{enumerate}

These follow from the fact that for a given set of positive integers defining the weights, and positive integers  $\alpha$ and $\beta$ the following holds:
\[\PP(a_0,\dots,a_n)\cong\PP(\alpha a_0,\dots,\alpha a_n)\cong\PP(a_0,\beta a_1,\dots,\beta a_n)\quad\text{when }\gcd(a_0,\beta)=1\]
and this is exactly why one is permitted to do the process above and obtain isomorphic quotients.

\subsection{Toric varieties and TDMS as quotients}
Let $X$ be a toric variety of dimension $d$ determined by the fan $\Delta$ in $N\cong\Z^d$ and denote the set of $1$-dimensional cones in $\Delta$ by $\Delta(1)$. Denote by $M=\Hom(N,\Z)$ the dual lattice and assume $\Delta(1)=\{\rho_1,\dots,\rho_n\}$. Suppose $b_i\in\rho_i\cap N$ are the unique generators of each row and $\{b_1,\dots,b_n\}$ spans $N_\Q=N\otimes_\Z\Q$.

One has the following exact sequence, see (\cite{ful}, \S3.4).

\begin{equation}\label{exact-seq1}\xymatrixcolsep{2.8pc}\xymatrixrowsep{2.8pc}
\xymatrix{
0\ar[r]&M\ar[r]^B&\Z^n\ar[r]^A& \Cl(X)\ar[r]& 0}\end{equation}
where $B=[b_1\cdots b_n]^T$, see \cite[14.3]{Cox-book} and \cite{Cox}. The matrix $A$ is simply the Gale dual of $B_\Q\colon M_\Q\rightarrow\Q^n$, and it is an $r\times n$ integer matrix, unique up to an action of $\Gl(r,\Z)$ corresponding to the unimodular coordinate transformations of $\Z^r$, where $r=n-d$. The rows of $A$ determine the $r$ integer relations among the $b_i$s.

The Cox ring of $X$ is defined to be $\Cox(X)=\C[x_1,\dots,x_n]$ with a grading: consider the action of $G=\Hom_\Z(\Cl(X),\C^*)$ on $(\C^n-V(I))$, where $I\subset\Cox(X)$ is the irrelevant ideal associated to $\Delta$, see \cite[\S1]{Cox}.

Assume that the fan $\Delta$ is simplicial, so that the quotient $(\C^n-V(I))\qt G$ is geometric \cite[Theorem~2.1 (iii)]{Cox}. For the purpose of this section this is a fine assumption by \cite[Remark~3.4]{borisov}. Also assume that $G\cong(\C^*)^r$. The aim in this section is to study the quotient space and give algorithms to remove generic stabilisers and quasi-reflections. If $G$ is not a torus then it has a torsion part for which removing generic stabilisers and quasi-reflections are trivial.

The action of $G$ defines a grading on $\Cox(X)$ by the columns of the matrix $A$, as follows. By applying the functor $\Hom(-,\C^*)$ to the short exact sequence~(\ref{exact-seq1}) one obtains
 \begin{equation}\label{exact-seq2}\xymatrixcolsep{2.8pc}\xymatrixrowsep{2.8pc}
\xymatrix{
1\ar[r]&G\ar[r]^{A^*}&(\C^*)^n\ar[r]& \mathbb{T}\ar[r]& 1,}\end{equation}
where $\mathbb{T}$ is the torus acting on $X$.
The action of $G$ on $\C^n$ is the extension of the action on $(\C^*)^n$ above and is identified by the matrix $A=(a_{ij})$ above in the following way
\[(\lambda_1,\dots,\lambda_r)\cdot x_j\mapsto \prod_{i=1}^r\lambda_i^{a_{ij}}x_j.\]
Hence the data $(\C^n, I, A)$ identifies the original toric variety. 
\begin{notation}We denote this quotient space by $X=(I,A)$, where $\C^n$ is dropped as it can be realised from $I$ and $A$.\end{notation}

\paragraph*{\bf Stacky models} Following~\cite{borisov}, the stacky fan $\bold\Delta$ is a triple $(N, \Delta, \beta)$, where $N$ is a finitely generated abelian group, $\Delta$ is a simplicial fan in $N\otimes_\Z\Q$ with $n$ rays and a map $\beta\colon\Z^n\rightarrow N$, where $\beta(e_i)$, the image of standard basis in $\Z^n$, generates the rays in $\Delta$. With the generalisation of Gale duality as in \cite[\S.2]{borisov}, the TDMS $\mathcal{X}$ associated to the stacky fan $\bold\Delta$ can be recovered as the quotient $[Z/G]$, where $Z=\C^n-V(I)$ as before, and the action of $G$ is defined by the Gale dual of $\beta$ similar to the toric case, see \cite[\S.3]{borisov}. Hence the TDMS $\mathcal{X}$ can also be recovered from the (stacky) data $(\C^n,I,A)$. Note that the difference with the non-stacky construction is incorporated fully in $A$. 
\begin{notation} Similar to the previous case we denote this quotient by $\mathcal{X}=[I,A]$.
\end{notation}
Let us recall that the coarse moduli space of $\mathcal{X}$ is the toric variety associated to the non-torsion part of $N$ and $\Delta$ (\!\cite[Proposition~3.7]{borisov}).

The aim in this section is to recover the coarse moduli space of $\mathcal{X}$ given the information $(\C^n,I,A)$, without constructing the fans.

\subsection{Well formed Cox rings}

\begin{defi} For a toric variety as above, or a TDMS, we define its rank to be the rank of the matrix $A$.\end{defi}

\begin{lm}\label{gliso} Let $X=(I,A)$, so that the quotient is geometric, and $B=gA$ for some $g\in \Gl(r,\mathbb{Q})$ with integer entries. Suppose $X'$ is the quotient of $\C^n-V(I)$ by the group $(\C^*)^r$ so that its action is defined similar to that of $X$ but via $B$. Then $X$ is 
isomorphic to $X'$ as quotient varieties.
\end{lm}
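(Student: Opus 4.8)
The plan is to produce a surjective homomorphism of tori that intertwines the two $(\C^*)^r$-actions, so that the $A$-action and the $B$-action have \emph{literally the same orbits} on $\C^n-V(I)$; the isomorphism of geometric quotients then follows formally from the universal property. First I would record both actions by weights: writing $A^{(j)}$ for the $j$-th column of $A$ and using the multi-index convention $\lambda^{\mathbf m}=\prod_i\lambda_i^{m_i}$, the action defining $X$ is $\lambda\cdot_A x_j=\lambda^{A^{(j)}}x_j$, and the action defining $X'$ is $\mu\cdot_B x_j=\mu^{B^{(j)}}x_j$, where $B^{(j)}=gA^{(j)}$. The integer matrix $g$ defines a homomorphism of tori $\Psi\colon(\C^*)^r\to(\C^*)^r$ by $\Psi(\mu)_k=\prod_i\mu_i^{g_{ik}}$, and a one-line computation using $B=gA$ gives $\mu^{B^{(j)}}=\Psi(\mu)^{A^{(j)}}$ for every $j$, i.e. $\mu\cdot_B x=\Psi(\mu)\cdot_A x$ for all $x$. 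Thus the $B$-action is the pullback of the $A$-action along $\Psi$.

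The crucial input is $g\in\Gl(r,\Q)$, hence $\det g\neq 0$. Writing a Smith normal form $g=UDV$ with $U,V\in\Gl(r,\Z)$ and $D=\diag(d_1,\dots,d_r)$, $d_i\geq 1$, the map $\Psi$ becomes, after the torus automorphisms induced by $U$ and $V$, the map $(\mu_1,\dots,\mu_r)\mapsto(\mu_1^{d_1},\dots,\mu_r^{d_r})$. This is surjective (each $t\mapsto t^{d_i}$ is onto $\C^*$) with finite kernel of order $|\det g|=\prod_i d_i$. In particular $\Psi$ is surjective, so for each $x$ the $B$-orbit $\{\Psi(\mu)\cdot_A x:\mu\in(\C^*)^r\}$ equals the $A$-orbit $\{\nu\cdot_A x:\nu\in(\C^*)^r\}$.

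Since the $A$- and $B$-orbits on $\C^n-V(I)$ coincide as subsets, a regular function is $B$-invariant precisely when it is $A$-invariant, and the fibres of the two quotient maps agree. Hence the given geometric quotient for the $A$-action is simultaneously a geometric quotient for the $B$-action, and the universal property of the quotient yields a canonical isomorphism $X\cong X'$ of varieties (with $X'$ automatically geometric).

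The main subtlety, and the only place the hypothesis is used, is that $g$ need not be unimodular, so $\Psi$ is \emph{not} an isomorphism of groups but only a surjective isogeny. One should resist the temptation to seek an isomorphism between the two copies of $(\C^*)^r$: what forces the quotients to agree is surjectivity of $\Psi$, which equates the orbits. The finite kernel $\ker\Psi$ acts trivially in the $B$-presentation, since $\gamma\cdot_B x=\Psi(\gamma)\cdot_A x=x$ for $\gamma\in\ker\Psi$; it is exactly a generic stabiliser of the $B$-presentation that disappears on passing to the coarse quotient, which is consistent with the theme of this section.
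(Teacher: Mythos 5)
Your proof is correct and follows essentially the same route as the paper: both arguments reduce the lemma to showing that the $A$-orbits and $B$-orbits on $\C^n-V(I)$ coincide, and the key substitution $\lambda_i=\prod_k\gamma_k^{g_{ki}}$ in the paper is exactly your torus homomorphism $\Psi$ together with the identity $\mu\cdot_B x=\Psi(\mu)\cdot_A x$. Where you genuinely add value is the converse inclusion of orbits: the paper dismisses it with ``replace $g$ by $g^{-1}$'', which is imprecise since $g^{-1}$ has only rational entries, so the naive substitution would require rational exponents. Your observation that what is actually needed is surjectivity of the isogeny $\Psi$ --- guaranteed by $\det g\neq 0$ and the divisibility of $\C^*$, as seen via Smith normal form --- is the right way to close that half of the argument, and your remark that $\ker\Psi$ is a generic stabiliser of the $B$-presentation correctly ties the lemma to the well-forming procedure of the rest of the section.
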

\begin{proof} We give an explicit and set theoretic proof. $X$ and $X'$ are defined by
\[X=(\C^n-V(I))\slash G_A,\qquad X'=(\C^n-V(I))\slash G_B, \]
where $G_A\cong G_B\cong(\C^*)^r$. If we denote $A=(a_{ij})$ and $B=(b_{ij})$, then for $(\lambda_1,\dots,\lambda_r)\in G_A$ and $(\gamma_1,\dots,\gamma_r)\in G_B$, the actions are the following:
\[G_A:\qquad(\lambda_1,\dots,\lambda_r).(x_1,\dots,x_n)\mapsto(\prod_{i=1}^r\lambda_i^{a_{i1}}x_1,\dots,\prod_{i=1}^r\lambda_i^{a_{in}}x_n)\]
\[G_B:\qquad(\gamma_1,\dots,\gamma_r).(x_1,\dots,x_n)\mapsto(\prod_{i=1}^r\gamma_i^{a_{i1}}x_1,\dots,\prod_{i=1}^r\gamma_i^{a_{in}}x_n)\]
Let $(\text{x})$ and $(\text{y})$ be two vectors in $\C^n$. Let us denote by $(\text{x})\sim_A(\text{y})$ if $(\text{x})$ and $(\text{y})$ are in the same orbit of the action by $G_A$, and similarly for $(\text{x})\sim_B(\text{y})$. The aim is to show 
\[(\text{x})\sim_A(\text{y})\quad \text{ if and only if }\quad (\text{x})\sim_B(\text{y}).\]
If $(\text{x})\sim_B(\text{y})$, then there exists $(\gamma_1,\dots,\gamma_r)\in(\C^*)^r$ such that 
\[(y_1,\dots,y_n)=(\prod_{i=1}^r\gamma_i^{b_{i1}}x_1,\dots,\prod_{i=1}^r\gamma_i^{b_{in}}x_n).\]
To prove $(\text{x})\sim_A(\text{y})$, we must find $(\lambda_1,\dots,\lambda_r)\in(\C^*)^r$ such that
\[(y_1,\dots,y_n)=(\prod_{i=1}^r\lambda_i^{a_{i1}}x_1,\dots,\prod_{i=1}^r\lambda_i^{a_{in}}x_n).\]
This follows from $b_{ij}=\sum_kg_{ik}a_{kj}$, if we put $\lambda_i=\gamma_1^{g_{i1}}\dots\gamma_r^{g_{ir}}$.\\
Proof for the only if part is very similar and it is done by replacing $g$ by $g^{-1}$.
\end{proof}

\begin{defi}Let $M\in \mathcal{M}_{r\times n}(\mathbb{Z})$ be a rank $r$ matrix ($r\leq n$). Suppose $m_1,\dots, m_s$ are all the non-zero $r\times r$ minors of $M$ and let $d_M=\gcd(|m_1|,\dots,|m_s|)$. The matrix $M$ is called {\it standard} if  $d_M=1$.\end{defi}

\begin{lm}\label{dpre} For any rank $r$ matrix $M\in\mathcal{M}_{r\times n}(\Z)$, there exist matrices $g\in \Gl(r,\Q)\cap \mathcal{M}_{r\times r}(\Z)$ and $N\in \mathcal{M}_{r\times n}(\Z)$ such that $M=gN$ and $N$ is a standard matrix of rank $r$.\end{lm}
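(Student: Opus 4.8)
The plan is to reduce $M$ to Smith normal form and simply read off the desired factorisation from it. Since $M$ has rank $r$, standard theory provides unimodular matrices $U\in\Gl(r,\Z)$ and $V\in\Gl(n,\Z)$ with $UMV=D$, where $D=[\,\diag(d_1,\dots,d_r)\mid 0\,]$ is the $r\times n$ matrix whose only nonzero entries are the invariant factors $d_1\mid d_2\mid\cdots\mid d_r$ placed on the diagonal. Writing $\Lambda=\diag(d_1,\dots,d_r)\in\mathcal{M}_{r\times r}(\Z)$ and $P=[\,I_r\mid 0\,]\in\mathcal{M}_{r\times n}(\Z)$, this reads $D=\Lambda P$, and therefore $M=U^{-1}\Lambda P\,V^{-1}$. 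The intuition is that the entire ``content'' of the maximal minors of $M$ (indeed $d_M=d_1\cdots d_r$, since the $\gcd$ of all $r\times r$ minors is unchanged under unimodular transformations) is gathered into $\Lambda$, which we will absorb into $g$.

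With this in hand I would set
\[ g=U^{-1}\Lambda\in\mathcal{M}_{r\times r}(\Z),\qquad N=P\,V^{-1}\in\mathcal{M}_{r\times n}(\Z), \]
so that $M=gN$ by construction. Both are integer matrices, since $U^{-1}$ and $V^{-1}$ are integral (being inverses of unimodular matrices) and $\Lambda,P$ are integral. Moreover $\det g=\det(U^{-1})\det\Lambda=\pm\,d_1\cdots d_r\neq0$, so $g\in\Gl(r,\Q)\cap\mathcal{M}_{r\times r}(\Z)$ as required; and $\rank N=r$ because $P$ has rank $r$ and $V^{-1}$ is invertible. It then remains only to verify that $N$ is standard.

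The crux of the argument, and the one step that requires more than bookkeeping, is precisely this last point. By construction $N=P\,V^{-1}$ consists of the first $r$ rows of the unimodular matrix $V^{-1}$. Because the rows of $V^{-1}$ form a $\Z$-basis of $\Z^n$, these first $r$ rows are part of a basis; equivalently, the element they determine in $\bigwedge^r\Z^n$ is a member of the induced basis of $\bigwedge^r\Z^n$, hence primitive. Since the coordinates of this element in the standard basis are exactly the maximal $r\times r$ minors of $N$, primitivity says their $\gcd$ equals $1$, i.e.\ $d_N=1$ and $N$ is standard. This yields the sought factorisation $M=gN$, completing the proof. I expect the only genuinely nontrivial ingredient to be the fact that a primitive $r$-frame inside a $\Z$-basis has maximal minors of $\gcd$ one; everything else is a direct consequence of the existence of Smith normal form.
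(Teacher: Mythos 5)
Your proof is correct, but it takes a genuinely different route from the paper. You invoke the Smith normal form $UMV=[\Lambda\mid 0]$ and extract the factorisation $M=(U^{-1}\Lambda)(PV^{-1})$ in one shot, with the standardness of $N=PV^{-1}$ coming from the fact that the first $r$ rows of a unimodular matrix span a direct summand of $\Z^n$, so their top wedge is a primitive vector of $\bigwedge^r\Z^n$ whose coordinates are the maximal minors. (Equivalently, and perhaps more in the spirit of the paper, you could note that $PV^{-1}\colon\Z^n\to\Z^r$ is a surjection composed with an isomorphism, hence surjective, and appeal to Proposition~\ref{LA}.) The paper instead argues by induction on $d_M$: it picks a prime $p\mid d_M$, reduces $M$ modulo $p$, uses row reduction over $\mathbb{F}_p$ and the surjectivity of $\SL_r(\Z)\to\SL_r(\mathbb{F}_p)$ to find $g\in\SL_r(\Z)$ making the last row of $gM$ divisible by $p$, and peels off a factor $\diag(1,\dots,1,p)$, dropping $d_M$ by a factor of $p$ at each step. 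Your argument is shorter and non-inductive, at the cost of citing Smith normal form as a black box; the paper's prime-by-prime procedure is more algorithmic and is reused verbatim as a subroutine in the subsequent ``well forming process'' (which explicitly refers back to the $g\in\SL_r(\Z)$ constructed in this proof), so the inductive formulation is not merely a stylistic choice there. Both proofs establish the lemma as stated.
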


We try to remove every factor of $d_M$ by multiplying $M$ with a matrix whose inverse is in $\Gl(r,\mathbb{Q})\cap \mathcal{M}_{r\times r}(\mathbb{Z})$.

\begin{proof}We induct on the value $d_M$. If $d_M=1$, there is nothing to show. Suppose $d_M>1$, and let $p$ be a prime factor of $d_M$. Denote by $\overline{M}\in\mathcal{M}_{r\times n}(\mathbb{F}_p)$ the reduction modulo $p$ of $M$. Find 
$\overline{g}\in\SL_r(\mathbb{F}_p)$ such that $\overline{g}\overline{M}$ is the row echelon form up to constant. Note that since $d_M$ is divisible by $p$, $\overline{M}$ has the rank strictly smaller than $r$; this in turn implies that all the entries of the $r$-th row of $\overline{g}\overline{M}$ are $0$. By the surjectivity of the map $\SL_r(\Z)\rightarrow\SL_r(\mathbb{F}_p)$, we find a lift $g$ of $\overline{g}$. Since the $r$-th row of $gM$ is divisible by $p$, if we define $N\in\mathcal{M}_{r\times n}(\Z)$ as
\[gM= \left(\begin{array}{cc}
I_{r-1}&0\\
0&p\end{array}\right)N,\]
then $d_N = \frac{1}{p}d_{gM} = \frac{1}{p}d_M$ holds. By applying the induction hypothesis to $N$, we obtain the conclusion.
\end{proof}

\begin{prop}\label{LA} Let $A\in\mathcal{M}_{r\times n}(\Z) $ be a matrix of rank $r$. The following are equivalent.
\begin{enumerate}[(i)]
\item $A\colon\Z^n\rightarrow\Z^r$ is surjective.
\item $\wedge^rA\colon\wedge^r\Z^n\rightarrow\wedge^r\Z^r\cong\Z$ is surjective.
\item $A$ is standard.
\end{enumerate}\end{prop}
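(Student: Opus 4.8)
The plan is to prove the chain of equivalences by showing $(i)\Leftrightarrow(ii)$ and $(ii)\Leftrightarrow(iii)$, working entirely over $\Z$ with the Smith normal form as the main computational tool. The central observation is that all three conditions are invariant under left-multiplication by $\Gl(r,\Z)$ and right-multiplication by $\Gl(n,\Z)$, so by the Smith normal form I may assume $A$ has been reduced to the form $[D\mid 0]$, where $D=\diag(d_1,\dots,d_r)$ with $d_1\mid d_2\mid\cdots\mid d_r$ the invariant factors. Under such a reduction each of the three properties can be read off directly, which is what makes the equivalence transparent.

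First I would establish $(i)\Leftrightarrow(iii)$. After reduction to Smith form, surjectivity of $A\colon\Z^n\rightarrow\Z^r$ is equivalent to the statement that $D=\diag(d_1,\dots,d_r)$ surjects onto $\Z^r$, which holds if and only if every $d_i=1$, i.e.\ $\det D=1$. On the other hand, the $r\times r$ minors of $[D\mid 0]$ are, up to sign, exactly the products $d_{j_1}\cdots d_{j_r}$ obtained by choosing $r$ of the $n$ columns; since the divisibility $d_1\mid\cdots\mid d_r$ means the nonzero minors are all multiples of $d_1\cdots d_r=\det D$ and this value itself occurs as the minor from the first $r$ columns, one gets $d_A=\det D$. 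Hence $A$ is standard, meaning $d_A=1$, exactly when $\det D=1$, which matches surjectivity. The key point to verify carefully here is that the gcd of the $r\times r$ minors is unchanged when passing to Smith form, since this is precisely the statement that the $r$-th determinantal divisor is a $\Gl(r,\Z)\times\Gl(n,\Z)$-invariant; this is the classical fact underlying the uniqueness of Smith normal form.

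For $(ii)\Leftrightarrow(iii)$, I would interpret the induced map $\wedge^rA\colon\wedge^r\Z^n\rightarrow\wedge^r\Z^r\cong\Z$ on the standard bases. The image of $\wedge^rA$ is generated by the images of the basis wedges $e_{j_1}\wedge\cdots\wedge e_{j_r}$, and each such image is precisely the $r\times r$ minor of $A$ on columns $j_1,\dots,j_r$ times the generator of $\wedge^r\Z^r$. Therefore the image of $\wedge^rA$ as a subgroup of $\Z$ is exactly $d_A\,\Z$, and this subgroup is all of $\Z$ (surjectivity) if and only if $d_A=1$, which is the standardness condition. This gives $(ii)\Leftrightarrow(iii)$ immediately once the minor description of $\wedge^r$ on basis elements is set up.

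The main obstacle, such as it is, lies not in any single implication but in justifying the invariance claim that lets me reduce to Smith form: I must confirm that $d_A$, the gcd of the maximal minors, and the surjectivity of $A$ are genuinely preserved under the unimodular changes of basis on both sides. The determinantal-divisor argument handles this cleanly for $d_A$, and surjectivity is obviously preserved since $\Gl(r,\Z)$ and $\Gl(n,\Z)$ are isomorphisms on the respective lattices; so the only real care needed is to state the Smith normal form with the correct divisibility conventions and to track that the first-$r$-columns minor realizes the gcd. Alternatively, if one prefers to avoid Smith form, the equivalences $(ii)\Leftrightarrow(iii)$ is purely formal as above, and $(i)\Leftrightarrow(ii)$ can be obtained by noting that a $\Z$-linear map to $\Z^r$ is surjective precisely when its top exterior power is surjective, a fact that itself reduces to the structure theorem for finitely generated abelian groups applied to the cokernel.
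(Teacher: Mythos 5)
Your proof is correct. The paper itself gives no argument here --- its ``proof'' is the single line ``This is straightforward linear algebra'' --- and your Smith-normal-form reduction for $(i)\Leftrightarrow(iii)$ together with the minor description of $\wedge^rA$ for $(ii)\Leftrightarrow(iii)$ is precisely the standard way to fill in that claim, with the determinantal-divisor invariance correctly identified as the one point needing justification.
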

\begin{proof}This is straightforward linear algebra.\end{proof}

\begin{defi}Let $A$ be a standard $r\times n$ matrix with integer entries. Suppose $A_k$ is an $r\times (n-1)$ matrix obtained by removing the $k$-th column of $A$. The matrix $A$ is called {\it well-formed} if every $A_k$ ($1\leq k\leq n$) is standard.\end{defi}

\begin{lm}\label{pre1dspre} Let $X=(I,A)$ be a toric variety defined by an irrelevant ideal $I$ and an $r\times n$ matrix $A=(a_{ij})$, as before. Assume $q\neq 1$ is a positive integer such that $q\mid a_{1j}$ for $j>1$ but $q\nmid a_{11}$. Define the matrix $B=(b_{ij})$ by $b_{i1}=q.a_{i1}$ and $b_{ij}=a_{ij}$ for $j>1$. Then $X_B=(\C^n-V(I))/G_B$ is isomorphic to $X$ as schemes.\end{lm}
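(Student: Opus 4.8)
The plan is to imitate the explicit, set-theoretic style of the proof of Lemma~\ref{gliso}, except that the map on $\C^n$ can no longer be the identity: it must absorb the factor $q$ sitting on the first coordinate. First I would introduce $\phi\colon\C^n\to\C^n$ defined by
\[\phi(x_1,x_2,\dots,x_n)=(x_1^q,x_2,\dots,x_n).\]
Since $I$ is a monomial (irrelevant) ideal, $V(I)$ is a union of coordinate strata and $x_1=0$ precisely when $x_1^q=0$, so $\phi$ carries $\C^n-V(I)$ into itself. Using $b_{i1}=q\,a_{i1}$ and $b_{ij}=a_{ij}$ for $j>1$, a one-line check gives $\phi(\lambda\cdot_A\text{x})=\lambda\cdot_B\phi(\text{x})$ for every $\lambda\in(\C^*)^r$, where $\cdot_A$ and $\cdot_B$ denote the actions defined by $A$ and $B$. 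Hence $\phi$ descends to a morphism $\bar\phi\colon X\to X_B$ of the two quotients, and everything reduces to showing $\bar\phi$ is an isomorphism.

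Surjectivity of $\bar\phi$ is immediate, since any representative $(\text{y})$ of a point of $X_B$ has a preimage obtained by extracting a $q$-th root of $y_1$. The real content is injectivity, which I would check orbit by orbit. On a stratum with $x_1=0$ the map $\phi$ is the identity and the two actions agree (they differ only through the first column), so there injectivity is trivial; the representative case is a point with all coordinates nonzero. Suppose then $\phi(\text{x})=\gamma\cdot_B\phi(\text{x}')$ with $\gamma\in(\C^*)^r$. Comparing coordinates gives $x_j=(\prod_i\gamma_i^{a_{ij}})x_j'$ for $j>1$, while on the first coordinate one only recovers $x_1=\zeta(\prod_i\gamma_i^{a_{i1}})x_1'$ for some $q$-th root of unity $\zeta$. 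Seeking $\lambda_i=\gamma_i\eta_i$ with $\eta_i$ a $q$-th root of unity, the correction $\eta_i$ is invisible on the coordinates $x_j$ with $j>1$ because $q\mid a_{ij}$ there, so the only requirement is $\prod_i\eta_i^{a_{i1}}=\zeta$. Thus injectivity is equivalent to surjectivity of the homomorphism $\mu_q^{\,r}\to\mu_q$, $(\eta_i)\mapsto\prod_i\eta_i^{a_{i1}}$, that is, to the statement $\gcd(a_{11},\dots,a_{r1},q)=1$.

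Verifying this coprimality from the hypotheses is the step I expect to be the main obstacle, since it is where the arithmetic of $A$ genuinely enters. Here I would use that $A$ is standard, which holds because $X=(I,A)$ is a toric variety with $G\cong(\C^*)^r$, so $A\colon\Z^n\to\Z^r$ is surjective (Proposition~\ref{LA}), i.e.\ $d_A=1$. Suppose a prime $p$ divided $q$ and every entry of the first column of $A$. Any $r\times r$ minor of $A$ avoiding the first column has its entire first row divisible by $q$ (by $q\mid a_{1j}$ for $j>1$), hence is divisible by $p$; any $r\times r$ minor using the first column is, by Laplace expansion along the first row, congruent modulo $q$ to $a_{11}$ times a minor of the lower rows, hence divisible by $p$ because $p\mid a_{11}$. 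Then $p$ would divide $d_A$, contradicting $d_A=1$. This yields $\gcd(a_{11},\dots,a_{r1},q)=1$ and therefore the injectivity above.

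Finally I would promote the bijection to an isomorphism of schemes. On the open dense torus $(\C^*)^n$ the map $\phi$ is a homomorphism, inducing a bijective homomorphism $\mathbb{T}_A\to\mathbb{T}_B$ between tori of the same dimension $n-r$; in characteristic zero such a homomorphism is an isomorphism, so $\bar\phi$ is birational. As $X$ and $X_B$ are normal toric quotients and $\bar\phi$ is a bijective birational morphism onto a normal target, Zariski's main theorem gives that $\bar\phi$ is an isomorphism. I would remark that what the argument actually consumes is the coprimality $\gcd(a_{11},\dots,a_{r1},q)=1$ supplied by standardness; the stated hypothesis $q\nmid a_{11}$ records that one is in the quasi-reflection situation, as opposed to factoring a common divisor out of the first row, which is the generic-stabiliser step governed by the $\gcd$ of that row.
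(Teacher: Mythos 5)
Your route is genuinely different from the paper's: you build an explicit equivariant map $\phi(x_1,\dots,x_n)=(x_1^q,x_2,\dots,x_n)$ and argue orbit by orbit, whereas the paper fixes a very ample class $D=\sum\alpha_iC_i$ with $q\nmid\alpha_1$, observes that in the section ring $R_{qD}$ the variable $x_1$ only occurs in powers of $x_1^q$, and identifies $\Proj R_{qD}$ with the analogous ring for $\C[x_1^q,x_2,\dots,x_n]$, i.e.\ with $X_B$. Your strategy is viable, but the injectivity step contains a genuine error. You assert that the correction $\eta_i\in\mu_q$ ``is invisible on the coordinates $x_j$ with $j>1$ because $q\mid a_{ij}$ there.'' The hypothesis only gives $q\mid a_{1j}$ for $j>1$; for $i\geq 2$ the entries $a_{ij}$ need not be divisible by $q$, so a nontrivial $\eta_i$ with $i\geq 2$ would in general destroy the matching on the coordinates $x_j$, $j>1$. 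The admissible corrections are therefore not all of $\mu_q^{\,r}$ but essentially only $\mu_q\times\{1\}^{r-1}$, and injectivity is not equivalent to $\gcd(a_{11},\dots,a_{r1},q)=1$: what you actually need is surjectivity of $\eta\mapsto\eta^{a_{11}}$ on $\mu_q$, i.e.\ $\gcd(a_{11},q)=1$, which for composite $q$ is strictly stronger than the stated hypothesis $q\nmid a_{11}$.

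The repair is one line, and it uses exactly the minor computation you already wrote, applied to a weaker supposition. Suppose a prime $p$ divides both $q$ and $a_{11}$ (you do not need it to divide the whole first column). Then every entry of the first row of $A$ is divisible by $p$, since $p\mid a_{11}$ and $p\mid q\mid a_{1j}$ for $j>1$; Laplace expansion along the first row shows every $r\times r$ minor of $A$ is divisible by $p$, contradicting $d_A=1$. Hence $\gcd(a_{11},q)=1$, and you may take $\eta_2=\dots=\eta_r=1$ and solve $\eta_1^{a_{11}}=\zeta$. With that substitution the rest of your argument --- equivariance, surjectivity, the descent to $\bar\phi$, and the upgrade from a bijective birational morphism of normal varieties to an isomorphism via Zariski's main theorem --- goes through. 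It is worth noting that the paper's own proof tacitly relies on the same coprimality: the step ``$a_1a_{r1}=q\alpha$, so $q$ divides $a_1$'' needs $\gcd(a_{r1},q)=1$ rather than merely $q\nmid a_{r1}$, and is justified by the same appeal to standardness.
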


\begin{proof} The matrix $A$ can be arranged so that all elements in the last row are zero or divisible by $q$ except $a_{r1}$; This is done as in the proof of Lemma\,\ref{dpre} after a multiplication by a suitable $g$. Suppose $R=\C[x_1,\dots,x_n]$ is the Cox ring of $X$ graded by $A$. Columns of $A$, denoted by $C_1,\dots,C_n$, considered as lattice points in $\Z^r$, define the grading on $R$ and determine various cones associate to $X$, in particular the ample cone; see for example Section\,7 in\,\cite{Hausen}. Take $D=\sum\alpha_iC_i$ such that $D$, in the ample cone of $X$, corresponds to a very ample divisor. Note that $D$ can be chosen so that $q$ does not divide $\alpha_1$. Let $R_D$ be the ring generated by monomials of degree $D$, then $X\cong X_D=\Proj R_D$. 

Now consider the ring $R_{qD}$. Let $x_1^{a_1}\dots x_n^{a_n}$ be a monomial in $R_{qD}$. If $a_1\neq 0$ there is a positive integer $m$ such that
\[a_1C_1+\dots+a_nC_n=mqD.\]

In particular, $a_1a_{r1}=q\alpha$ for some non-zero integer $\alpha$, so $q$ divides $a_1$. Therefore $x_1$ appears in $R_{qD}$ only to powers of $x_1^q$. Hence $R_qD\cong R'_{qD}$, where $R'_{qD}$ is constructed like $R_{qD}$ for the ring $R'=\C[x_1^q,x_2,\dots,x_n]$, with the grading $\deg(x_i)=C_i$. But this is the coordinate ring of $X_B$ (graded by the columns of the matrix $B$).\end{proof}

Note that this lemma implies for a standard matrix $A$ that is not well-formed, one can perform the process of Lemma~\ref{dpre} to obtain a well-formed model, which provides isomorphic quotient schemes. In fact, the following process produces a well-formed model for a matrix in $\mathcal{M}_{r\times n}(\Z)$.

\paragraph*{\bf Well forming process} Suppose $A$ is standard but not well-formed. Without loss of generality, we can assume that $A_1$ is not well-formed. Find a matrix $g\in\SL_r(\Z)$ as in the proof of Lemma\,\ref{dpre} so
that the top row of $gA_1 = (gA)_1$ is divisible by a prime number $q$. Then we are in the
situation of Lemma\,\ref{pre1dspre} and can multiply the first column of $gA$ by $q$, without changing
the corresponding GIT quotient. Finally divide the first row of $gA$ by $q$ and let $C$ denote
the new matrix. It is easy to check that $d_{C_i} =d_{A_i} =1$ for $i>1$ and $d_{C_1} = \frac{1}{q}d_{A_1}$. Replace $A$ with $C$ and repeat the procedure.

The following is an immediate corollary of the construction of well-formed models and \cite[Proposition\,3.7]{borisov}.
\begin{cor} Given TDMS $\mathcal{X}=[I,A]$, its coarse moduli space is the toric variety $X=(I,A')$, where $A'$ is the well-formed matrix of $A$.
\end{cor}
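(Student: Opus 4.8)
The plan is to match the well-formed model $(I,A')$ with the output of \cite[Proposition~3.7]{borisov}. Write $\mathcal{X}=[Z/G]$ with $Z=\C^n-V(I)$ for the stacky fan $(N,\Delta,\beta)$, so that $A$ is a Gale dual of $\beta\colon\Z^n\rightarrow N$ and the rays of $\Delta$ are generated by the $\beta(e_i)$. By \cite[Proposition~3.7]{borisov} the coarse moduli space of $\mathcal{X}$ is the toric variety $\overline{X}$ attached to the fan $\Delta$ inside the non-torsion lattice $\overline{N}=N/N_{\mathrm{tors}}$. First I would put this toric variety in Cox form: letting $B=[b_1\cdots b_n]^T$ record the \emph{primitive} generators $b_i$ of the rays of $\Delta$ in $\overline{N}$, the exact sequence~\eqref{exact-seq1} identifies $\overline{X}=(I,\widehat{A})$, where $\widehat{A}$ is a Gale dual of $B$ and the irrelevant ideal is still $I$, since the combinatorics of $\Delta$ (hence the Stanley--Reisner data determining $I$) is untouched by the passage to $\overline{N}$ and to primitive generators.

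It then remains to check two things. The first is that $\widehat{A}$ is well-formed: by exactness of~\eqref{exact-seq1} the map $\widehat{A}\colon\Z^n\rightarrow\Cl(\overline{X})$ is surjective, hence $\widehat{A}$ is standard by Proposition~\ref{LA}; and the standardness of each $\widehat{A}_k$ follows from the fact that every $b_k$ is primitive and $\overline{N}$ is torsion-free, which is exactly the toric counterpart of well-formedness. The second, and main, point is that $\widehat{A}$ is the well-formed model $A'$ of $A$. Here I would trace through Gale duality: passing from $\beta\colon\Z^n\rightarrow N$ to $B\colon\Z^n\rightarrow\overline{N}$ amounts to discarding the torsion of $N$ and dividing each vector $\beta(e_i)$ by the index of $\langle b_i\rangle$ in the ray it spans. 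Under Gale duality these two moves are matched precisely by the two ingredients of the well forming process: a unimodular change of splitting, covered by Lemma~\ref{gliso}, together with the ``multiply a column by $q$, divide the corresponding row by $q$'' step justified by Lemma~\ref{pre1dspre}. Since each such step removes one prime factor of some $d_{A_k}$ without disturbing the others, iterating terminates at a well-formed matrix, and this matrix agrees with $\widehat{A}$ up to the $\Gl(r,\Z)$-ambiguity inherent in the choice of Gale dual. As the well-formed model is well-defined up to that ambiguity, we conclude $\widehat{A}=A'$ and hence $\overline{X}=(I,A')$.

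I expect the genuine obstacle to be this second point, namely verifying that the purely linear-algebraic well forming moves on $A$ correspond, term by term under Gale duality, to the geometric operations (killing $N_{\mathrm{tors}}$ and making the ray generators primitive) that define the coarse moduli fan; everything else is bookkeeping with the exact sequence~\eqref{exact-seq1} and Proposition~\ref{LA}. A secondary subtlety worth isolating beforehand is the assertion that honest toric data $(\overline{N},\Delta)$ with primitive rays always yields a well-formed matrix, which I would extract as a short standalone lemma rather than fold silently into the argument.
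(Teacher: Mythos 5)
Your proposal is correct and takes essentially the same route as the paper: the paper gives no separate argument, stating only that the corollary is immediate from the well forming process (Lemmas~\ref{gliso}, \ref{dpre}, \ref{pre1dspre}) combined with \cite[Proposition~3.7]{borisov}, which is exactly the combination you spell out. Your Gale-duality bookkeeping --- matching the removal of generic stabilisers and quasi-reflections with passing to the torsion-free lattice and to primitive ray generators --- simply makes explicit what the paper leaves implicit.
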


\begin{defi} Following \cite{fle}, the quotient space, or the toric variety or its Cox ring, defined by $(I,A)$ is called {\it well-formed} if its defining matrix $A$ is well-formed.\end{defi}

\begin{ex} The notion of well-formedness of weighted projective spaces is a special case of the well-formedness for Cox rings.\end{ex}

Let us demonstrate the process of well forming for an example. As a warm up for the following sections and the techniques used therein, we fully explain the variation of geometric invariant theory (VGIT) for this example. The notations used in this example will be used throughout this article.

\begin{ex}\label{F2}Consider the following matrix:
\[A=\left(\begin{array}{ccccc}
3&3&3&0&-2\\
1&1&1&2&0
\end{array}\right)\]
The set of nonzero determinants of its $2\times 2$ minors is $\{2,4,6\}$, and hence it is not standard as $d_A=2$. The standard model can be obtained by multiplying the following $2\times 2$ matrix
\[\left(\begin{array}{cc}1&0\\-1&1\end{array}\right)\in\Sl(2,\Z)\]
and then removing the factor of $2$ from the second row, which results in
\[A^s=\left(\begin{array}{ccccc}
3&3&3&0&-2\\-1&-1&-1&1&1
\end{array}\right)\]
Following out notation, $A_i$ is the matrix obtained from $A$ by removing the $i$-th column, we have that $d_{A^s_1}\!=\!\cdots\!=\!d_{A^s_4}\!=\!1$ and $d_{A^s_5}=3$. Applying Lemma~\ref{pre1dspre} we multiply the $5$-th column of $A^s$ we get
\[\left(\begin{array}{ccccc}
3&3&3&0&-6\\-1&-1&-1&1&3
\end{array}\right)\]
This matrix is not standard and its standard model can be obtained by removing a factor or $3$ from the first row
\[\left(\begin{array}{ccccc}
1&1&1&0&-2\\-1&-1&-1&1&3
\end{array}\right)\]

For ease of notation we perform an extra change of coordinates and bring the final matrix to the following form. This is done by a multiplication by the $2\times 2$ matrix 
\[\left(\begin{array}{cc}1&0\\1&1\end{array}\right)\in\Sl(2,\Z)\]
or equivalently by adding the second row of the matrix to the first row.
\[A'=\left(\begin{array}{ccccc}
1&1&1&0&-2\\0&0&0&1&1
\end{array}\right)\]
\end{ex}

\paragraph*{\bf A toric model:} Let $X=(I,A')$, where $A'$ is the matrix above and $I=(x,y,z)\cap(t,u)$ is the irrelevant ideal in the Cox ring $R=\C[x,y,z,t,u]$, graded by $A'$. Fan of $X$, as a toric variety, consists of $5$ one-dimensional cones generated by primitive vectors
\[r_1=(1,0,0), r_2=(0,1,0), r_3=(1,1,2), r_4=(-1,-1,-1), r_5=(1,1,1)\]
in $\Z^3$, and $6$ maximal cones 
\[\sigma_1=\left<r_1,r_2,r_4\right>,\quad\sigma_2=\left<r_1,r_2,r_5\right>,\quad\sigma_3=\left<r_1,r_3,r_4\right>\]
\[\sigma_4=\left<r_1,r_3,r_5\right>,\quad\sigma_5=\left<r_2,r_3,r_4\right>,\quad\sigma_6=\left<r_2,r_3,r_5\right>\]
Note that $r_1,\dots,r_5$ form the Gale transform of $A'$.

\paragraph*{\bf VGIT} Considering the action of $G\cong(\C^*)^2$ on $\C^5$ via $A'$, the geometric invariant theory chambers are the following
\[\xygraph{
!{(0,0) }="a"
!{(1.2,0) }*+{\scriptstyle{(1,0)}}="b"
!{(0,0.9) }*+{\scriptstyle{(0,1)}}="c"
!{(-1.4,1) }*+{\scriptstyle{(-2,1)}}="d"
"a"-"b"  "a"-"c" "a"-"d" }\]
Choosing a character from the interior of the cone generated by $(1,0)$ and $(0,1)$ constructs a variety isomorphic to $X$ (this is the ample cone of $X$), while a character from the cone generated by $(1,0)$ corresponds to $\PP^2$. Similarly the ray $(0,1)$ corresponds to $\PP(1,1,1,2)$. Let us see this in more details. The quotient corresponding to the character $(1,0)$ is, equivalently, the variety
\[X^{(1,0)}=\Proj\bigoplus_{n\geq 1}\H^0(X,\mathcal{O}_X(n,0))=\Proj\C[x,y,z]\cong\PP^2\]
and similarly for $(0,1)$
\[X^{(0,1)}=\Proj\bigoplus_{n\geq 1}\H^0(X,\mathcal{O}_X(0,n))=\Proj\C[t,ux^2,uxy,uy^2,uxz,uyz,uz^2]\]
which is the second Veronese embedding of $\PP(1,1,1,2)$. 

Clearly the map $X\rightarrow X^{(1,0)}$ is a fibration over $\PP^2$ with $\PP^1$ fibres. The map $X\rightarrow X^{(0,1)}\cong\PP(1,1,1,2)$ is given, in coordinates, by
\[(x,y,z,t,u)\mapsto(u^\frac{1}{2}x:u^\frac{1}{2}y:u^\frac{1}{2}z:t)\]
That shows that $X$ is the blow up of $\PP=\PP(1,1,1,2)$ at the singular point $(0:0:0:1)$.

Let us have a closer look at this variety and the map above. $\PP$ is an orbifold with a terminal cyclic quotient singularity of type $\frac{1}{2}(1,1,1)$. Denote the eigencoordinates of $\PP$ by $x,y,z,t$. The projective space $\PP$ is covered by $4$ open affine patches, three of them are $U_x\cong U_y\cong U_z\cong\C^3$, where $U_x$, for example, is the Zariski open subset $x\neq 0$, and the fourth patch is $U_t=\C^3\slash \Z_2$. The action of $\Z_2$ on $\C^3$ is given by $(\bar{x},\bar{y},\bar{z})\mapsto(\epsilon\bar{x},\epsilon\bar{y},\epsilon\bar{z})$, for $\epsilon$ a second root of unity, and is traditionally denoted by $\frac{1}{2}(1,1,1)$.

It is easy to check that the fan of $\PP$ (as a toric variety) consists of $4$ rays $r_1,r_2,r_3, r_4$ above and its maximal cones are
\[C_1=\left<r_1,r_2,r_3\right>,\,C_2=\left<r_1,r_3,r_4\right>,\,C_3=\left<r_1,r_2,r_4\right>\text{ and }C_4=\left<r_2,r_3,r_4\right>\]

Hence the fan of $X$ is obtained by adding the ray $r_5$ and subdividing the cone $C_1$, and $X$ is the resolution of $\PP$.

This VGIT for a rank $2$ toric variety is equivalent to running the $2$-ray game for it. See \cite{corti2} and \cite{Corti1} for an introduction to Sarkisov program and $2$-ray game, and Section\,3 in \cite{BZ} for this toric correspondence.

\section{Blow ups of low rank Cox rings and Sarkisov links}\label{blow-up}
We begin this section by considering a special class of rank 2 toric varieties. The goal is to understand their singularities and to construct explicit tools to write down the Cox rings of their (toric, weighted) blow ups. In \cite{BZ} it was shown how the Cox data of a rank 2 Cox ring changes as one runs the 2-ray game to obtain type $\III$ or $\IV$ Sarkisov links. Another goal in this section is to understand what happens, in terms of Cox data, when one runs the 2-ray game by starting a blow up on the rank 2 Cox ring, which, in good situations, leads to type $\I$ or $\II$ Sarkisov links.

\begin{defi}\label{weighted bundle-chap3}A {\it weighted bundle over $\PP^n$} is a rank 2 toric variety $\F=(A,I)$ defined by
\begin{enumerate}[(i)]
\item $\Cox(\F)=\C[x_0,\dots,x_n,y_0,\dots,y_m]$.
\item The irrelevant ideal of $\F$ is $I=(x_0,\dots,x_n)\cap(y_0,\dots,y_m)$.
\item and the $(\C^*)^2$ action on $\C^{n+m+2}$ is given by
\[A=\left(\begin{array}{ccccccc}
1&\dots&1&-\omega_0&-\omega_1&\dots&-\omega_m\\
0&\dots&0&1&a_1&\dots&a_m\end{array}\right),\]
where $\omega_i$ are non-negative integers and $\PP(1,a_1,\dots,a_m)$ is a well-formed weighted projective space.
\end{enumerate}\end{defi}
We denote this quotient by $\F_\PP(\omega_0,\dots,\omega_m)$ or sometimes simply by $\F$, when there is no ambiguity.

Note that the weighted bundle $\F$ defined in Definition~\ref{weighted bundle-chap3} is well-formed because the weighted projective space $\PP(1,a_1,\dots,a_m)$ is well-formed.

The following lemma constructs the fan associated to the weighted bundle in Definition~\ref{weighted bundle-chap3}. For a general description of the correspondence between the fan and the GIT construction see, for example, \cite[Section\,5.1]{Cox-book}.

\begin{theorem}\label{fan-rank2} Let ${\beta_1,\dots,\beta_m,\alpha_1,\dots,\alpha_n}$ be the standard basis of $\Z^{n+m}$. Suppose $\alpha_0$ and $\beta_0$ are the following vectors in $\Z^{n+m}$.
\[\beta_0=-{\sum_{i=1}^ma_i\beta_i}\quad ,\quad \alpha_0=-{\sum_{j=1}^n\alpha_j}+{\sum_{i=0}^m\omega_i\beta_i},\]
where $\omega_i$ are non-negative integers. Let $\sigma_{rs}=\left<\beta_0,\dots,\hat{\beta_r},\dots,\beta_m,\alpha_0,\dots,\hat{\alpha_s},\dots,\alpha_n\right>$ be the cone in $\mathbb{Z}^{n+m}$ generated by $\beta_0,\dots,\hat{\beta_r},\dots,\beta_m$ and $\alpha_0,\dots,\hat{\alpha_s},\dots,\alpha_n$, where $\alpha_s$ and $\beta_r$ are omitted. If we denote $\Sigma$ for the fan in $\Z^{n+m}$ generated by maximal cones $\sigma_{rs}$ for all $0\leq r\leq n$ and $0\leq s\leq m$, then $\F\cong T(\Sigma)$.\end{theorem}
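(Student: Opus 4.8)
The plan is to recover the fan of $\F$ directly from its Cox data $(I,A)$ via Gale duality, using the dictionary between simplicial toric GIT quotients and fans of \cite{Cox} (see also \cite[Ch.~14--15]{Cox-book}). Concretely, a toric variety presented as $(I,A)$ has primitive ray generators given by the rows of a matrix $B$ fitting into the exact sequence~(\ref{exact-seq1}), i.e.\ a Gale dual of $A$, while its maximal cones are read off from the minimal monomial generators of the irrelevant ideal $I$. So the argument splits into verifying that $\alpha_0,\dots,\alpha_n,\beta_0,\dots,\beta_m$ are the correct ray generators, and that the cones $\sigma_{rs}$ are exactly the maximal cones dictated by $I$.

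First I would check Gale duality. Writing $B$ for the $(n+m+2)\times(n+m)$ matrix whose rows are $\alpha_0,\dots,\alpha_n,\beta_0,\dots,\beta_m$, ordered to match the variables $x_0,\dots,x_n,y_0,\dots,y_m$, the relation $AB=0$ amounts to the two identities $\sum_{j=0}^n b_{x_j}=\sum_{i=0}^m\omega_i b_{y_i}$ (first row of $A$) and $b_{y_0}+\sum_{i=1}^m a_i b_{y_i}=0$ (second row of $A$); both follow immediately on substituting the defining formulas for $\alpha_0$ and $\beta_0$, since each side of the first equals $\sum_{i=0}^m\omega_i\beta_i$ and the second reduces to $\beta_0=-\sum_{i=1}^m a_i\beta_i$. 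Because $\F$ is well-formed, $A$ is standard, hence surjective by Proposition~\ref{LA}, so $\ker A$ has rank $n+m$; as $\alpha_1,\dots,\alpha_n,\beta_1,\dots,\beta_m$ already form a basis of $\Z^{n+m}$, the map $B$ is injective with saturated image, and exactness of~(\ref{exact-seq1}) identifies this image with $\ker A$. This certifies $\alpha_0,\dots,\alpha_n,\beta_0,\dots,\beta_m$ as the ray vectors. Primitivity is clear for the basis vectors, holds for $\alpha_0$ since its $\alpha$-coordinates are all $-1$, and holds for $\beta_0=-\sum a_i\beta_i$ precisely because $\gcd(a_1,\dots,a_m)=1$, which is part of the well-formedness of $\PP(1,a_1,\dots,a_m)$.

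Next I would identify the maximal cones. The minimal monomial generators of $I=(x_0,\dots,x_n)\cap(y_0,\dots,y_m)$ are exactly the products $x_sy_r$. Under the correspondence $\sigma\mapsto\prod_{\rho\notin\sigma}x_\rho$ of \cite{Cox}, a maximal cone of the $(n+m)$-dimensional fan omits exactly two of the $n+m+2$ rays, so it corresponds to a squarefree degree-two generator of $I$; since no monomial $x_sx_{s'}$ or $y_ry_{r'}$ lies in $I$, every maximal cone omits exactly one $\alpha$ and one $\beta$, giving $x_sy_r\leftrightarrow\sigma_{rs}$. Conversely each $\sigma_{rs}$ is a genuine simplicial cone: its $n+m$ generators are linearly independent, as one sees by projecting a relation onto the $\alpha$-block (which forces the coefficient of $\alpha_0$, and then of the remaining $\alpha_j$, to vanish) and then onto the $\beta$-block (where the coefficient of $\beta_0$ is killed because every weight $a_r$ is positive). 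Hence $\{\sigma_{rs}\}$ is precisely the collection of maximal cones, and since $\F$ is projective its fan is complete, so it coincides with $\Sigma$ and $\F\cong T(\Sigma)$.

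The displayed computations are routine linear algebra; the step I expect to require the most care is applying the cone/ideal dictionary rigorously, namely confirming that the hypotheses behind it (a simplicial fan, equivalently a geometric quotient) are met and that the fan determined by $I$ is simplicial and complete rather than some degeneration. In particular one must not overlook the two places where the hypotheses genuinely enter: the well-formedness of $\PP(1,a_1,\dots,a_m)$, i.e.\ $\gcd(a_1,\dots,a_m)=1$, which is exactly what makes $\beta_0$ a primitive generator, and the positivity of the weights $a_i$, which secures the linear independence of the generators of each $\sigma_{rs}$.
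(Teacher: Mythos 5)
Your proof is correct, but it runs the Cox dictionary in the opposite direction from the paper. The paper starts from the fan $\Sigma$ and computes its quotient presentation following Cox's recipe: it reads off $x^{\sigma_{rs}}=x_sy_r$ to get the irrelevant ideal, then determines the group $G\subset(\C^*)^{n+m+2}$ explicitly from the relations $\prod\mu_i^{\langle e_k,\alpha_i\rangle}\prod\lambda_j^{\langle e_k,\beta_j\rangle}=1$, arriving at a weight matrix $B$ that differs from $A$ by an $\SL(2,\Z)$ factor, and finally invokes Lemma~\ref{gliso} to identify the two quotients. You instead start from the data $(I,A)$ and reconstruct the fan: you verify $AB=0$ directly against the given $A$ (so Lemma~\ref{gliso} is not needed), use surjectivity of $A$ (Proposition~\ref{LA}) to certify the Gale-dual rays, and read the maximal cones off the minimal generators of $I$. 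Your route has the merit of isolating exactly where the hypotheses enter — primitivity of $\beta_0$ from well-formedness of $\PP(1,a_1,\dots,a_m)$, positivity of the $a_i$ for simpliciality of the $\sigma_{rs}$ — neither of which the paper's proof makes explicit. The trade-off is that you must invoke the reverse dictionary (that a geometric-quotient presentation determines its fan uniquely), whereas the paper's forward computation is self-contained modulo Lemma~\ref{gliso}. Two small points to tighten: your appeal to projectivity of $\F$ to get completeness of the fan deserves a word of justification (e.g.\ the ring of $G$-invariants of $\C[x_0,\dots,x_n,y_0,\dots,y_m]$ is just $\C$ because the $a_i$ are positive, so the GIT quotient is projective), and strictly speaking one should also note that the cones $\sigma_{rs}$ meet along common faces — though the paper's proof does not check this either.
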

\begin{proof} We compute the GIT construction of this fan following the recipe of Cox given in \cite{cox1}~\S10. By assumption, rays $\alpha_0,\dots,\alpha_n,\beta_0,\dots,\beta_m$ in $N=\Z^{m+n}$ form $\Delta(1)$, the set of 1-dimensional cones in $\Sigma$. Let us associate the variables $x_0,\dots,x_n,y_0,\dots,y_m$ to these rays. For a given maximal cone $\sigma$, define $x^\sigma$ to be the product of all variables not coming from rays of $\sigma$. But maximal cones in $\Sigma$ are exactly $\sigma_{rs}$, which immediately implies $x^{\sigma_{rs}}=x_sy_r$. The irrelevant ideal is given by 
\[I=(x^\sigma\mid \sigma\in\Sigma\text{ is a maximal cone})=(x_sy_r\mid 0\leq s\leq n\text{ and }0\leq r\leq r).\]

so that $I=(x_0,\dots,x_n)\cap(y_0,\dots,y_m)$.

In order to describe the GIT construction of $T(\Sigma)$ we must find the group $G$ such that
\[T(\Sigma)\cong(\Spec[x_0,\dots,x_n,y_0,\dots,y_m]-V(I))\slash G,\]

where $G\subset(\C^*)^{m+n+2}$ is defined by
\[G=\{(\mu_0,\dots,\mu_{n},\lambda_0,\dots,\lambda_{m})\in(\C^*)^{m+n+2}\mid \prod_{i=0}^{n}\mu_i^{\left<e_k,\alpha_i\right>}.\prod_{j=0}^{m}\lambda_j^{\left<e_k,\beta_j\right>}=1,\text{ for all }k\},\]
and $e_1,\dots,e_{m+n}$ form the standard basis of $\Z^{m+n}$. But the standard basis of $\Z^{m+n}$, by assumption, is $\{\alpha_1,\dots,\alpha_n,\beta_1,\dots,\beta_m\}$. 

Computing this set implies that $(\mu_0,\dots,\mu_n,\lambda_0,\dots,\lambda_m)\in G$ if and only if
\[\mu_i.\mu_0^{\left<\alpha_0,\alpha_i\right>}.\lambda_0^{\left<\beta_0,\alpha_i\right>}=1\quad\text{ and }\quad \lambda_j.\mu_0^{\left<\alpha_0,\beta_j\right>}.\lambda_0^{\left<\beta_0,\beta_j\right>}=1\quad\forall\, i,j.\]
In other words, $\lambda_0$ and $\mu_0$ determine all other $\lambda_j$ and $\mu_i$. Therefore the group $G$ is isomorphic to $(\C^*)^2$ and the action on coordinate variables is defined by
\[((\mu,\lambda).x_0)=\mu x_0\qquad  ((\mu,\lambda).x_i)=\mu^{-\left<\alpha_0,\alpha_i\right>}\lambda^{-\left<\beta_0,\alpha_i\right>}x_i,\]
\[((\mu,\lambda).y_0)=\lambda y_0 \qquad  ((\mu,\lambda).y_j)=\mu^{-\left<\alpha_0,\beta_j\right>}\lambda^{-\left<\beta_0,\beta_j\right>}y_j.\]
 
In other words, $(\C^*)^2$ acts on $\C[x_0,\dots,x_n,y_0,\dots,y_m]$ by the matrix
\[B=\left(\begin{array}{ccccccc}
1&\dots&1&0&\omega_0a_1-\omega_1&\dots&\omega_0a_m-\omega_m\\
0&\dots&0&1&a_1&\dots&a_m\end{array}\right).\]

We have shown so far that $T(\Sigma)\cong \mathfrak{X}(B,I)$. Multiplying $B$ on the left by the matrix 
\[\left(\begin{array}{cc}1&-\omega_0\\0&1\end{array}\right)\in\Sl(2,\Z),\]
together with Lemma~\ref{gliso} proves that $\F\cong T(\Sigma)$. \end{proof}

\begin{rmk}In \cite{M1} Chapter~2, Reid gives a detailed analysis of rational scrolls, which, in our setting, are the weighted bundles over $\PP^1$, with weights 1 only. In fact these are the smooth weighted bundles.\end{rmk} 

\begin{prop}\label{singularity-F} A well-formed weighted bundle $\F$, defined in Definition~\ref{weighted bundle-chap3}, is covered by $(n+1)(m+1)$ patches, each of them isomorphic to a quotient of $\mathbb{C}^{n+m}$ by a cyclic group $\mathbb{Z}_k$, for some positive integer $k$.\end{prop}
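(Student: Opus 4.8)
The plan is to work directly from the fan $\Sigma$ constructed in Theorem~\ref{fan-rank2} and exploit the standard correspondence between maximal cones of a simplicial fan and the affine toric patches covering the variety. Since $\F\cong T(\Sigma)$ and the maximal cones are precisely the $\sigma_{rs}$ for $0\leq r\leq m$ and $0\leq s\leq n$, there are exactly $(n+1)(m+1)$ of them, which already accounts for the stated number of patches. First I would recall that for a simplicial cone $\sigma$ of maximal dimension $n+m$ in the lattice $N=\Z^{n+m}$, the associated affine toric variety $U_\sigma$ is isomorphic to $\C^{n+m}/G_\sigma$, where $G_\sigma$ is the finite abelian group $N/N_\sigma$ with $N_\sigma$ the sublattice generated by the primitive ray generators of $\sigma$. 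The order of $G_\sigma$ equals the index $[N:N_\sigma]$, which is the absolute value of the determinant of the matrix whose columns are the ray generators of $\sigma$.

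The key computation is therefore to show that for each cone $\sigma_{rs}$ the quotient group $G_{\sigma_{rs}}$ is \emph{cyclic}. I would take the generators $\beta_0,\dots,\hat{\beta_r},\dots,\beta_m$ and $\alpha_0,\dots,\hat{\alpha_s},\dots,\alpha_n$ listed in Theorem~\ref{fan-rank2} and compute the Smith normal form (or equivalently the elementary divisors) of the $(n+m)\times(n+m)$ integer matrix they form. Cyclicity is the assertion that all elementary divisors except possibly the last equal $1$, i.e.\ that the group has a single invariant factor. Here is where the well-formedness hypothesis enters decisively: because $\PP(1,a_1,\dots,a_m)$ is well-formed and one of the entries in the relevant block is a $1$ (coming from $\beta_0=-\sum a_i\beta_i$ together with the distinguished coordinate of weight $1$), the $(n-1)$ directions coming from the $\alpha$'s are unimodular and contribute trivial factors, so the only possible nontrivial torsion comes from the weighted-projective part. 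The well-formedness of $\PP(1,a_1,\dots,a_m)$ guarantees that each such local group is cyclic, exactly as in the classical analysis of weighted projective space patches.

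Concretely, I would split the analysis by cases according to whether the omitted rays are $\alpha_0$ (resp.\ $\beta_0$) or one of the $\alpha_s$ with $s\geq 1$ (resp.\ $\beta_r$ with $r\geq 1$). In the cases where $\alpha_0$ is retained, the $\alpha$-block is already unimodular after column operations, and the determinant and the group structure are governed solely by the submatrix in the $\beta$-directions; there the cyclicity and the value of $k$ follow from a gcd computation among the $a_i$, using well-formedness to rule out noncyclic behaviour. The main obstacle I anticipate is the bookkeeping in the mixed cases where both an $\alpha$-ray and a $\beta$-ray (one of which may be $\alpha_0$ or $\beta_0$) are omitted simultaneously, since then the off-diagonal couplings through $\omega_0a_i-\omega_i$ in the matrix $B$ of Theorem~\ref{fan-rank2} mix the two blocks; I would handle this by performing integer row/column operations to clear these couplings, reducing to the well-formed weighted projective setting, and then reading off the single surviving invariant factor as the cyclic order $k$. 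Once each $G_{\sigma_{rs}}$ is shown to be cyclic, the isomorphism $U_{\sigma_{rs}}\cong\C^{n+m}/\Z_k$ is immediate from the general theory, completing the proof.
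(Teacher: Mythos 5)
Your plan is sound and would prove the proposition, but it takes a genuinely different route from the paper. The paper never passes through the fan: it defines the patch $\mathcal{U}_{ij}=(x_iy_j\neq 0)$ directly in the Cox quotient, writes $\mathcal{U}_{ij}=\Spec\C[x_0,\dots,x_n,y_0,\dots,y_m,x_i^{-1},y_j^{-1}]^{\C^*\times\C^*}$, and computes the invariant ring by hand, recognising it as the invariant ring of the cyclic action $\frac{1}{a_j}(0,\dots,0,1,a_1,\dots,a_m)$ on $\C^{n+m}$. You instead invoke Theorem~\ref{fan-rank2} and the standard dictionary $U_{\sigma}\cong\C^{n+m}/(N/N_{\sigma})$ for maximal simplicial cones, reducing everything to a Smith normal form computation on the ray matrix of $\sigma_{rs}$. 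Both are valid; yours makes the order of the group visible as a lattice index, while the paper's produces the explicit eigenweights of the cyclic action with no matrix manipulation. One correction to your justification: cyclicity is not where well-formedness of $\PP(1,a_1,\dots,a_m)$ enters. If you actually run the computation, the quotient $N/N_{\sigma_{rs}}$ is generated by the images of the omitted rays $\beta_r$ and $\alpha_s$ subject to $a_r\beta_r\equiv 0$ (from $\beta_0=-\sum_{i\ge 1} a_i\beta_i$) and $\alpha_s\equiv(\omega_r-\omega_0a_r)\beta_r$ (from $\alpha_0$, whose coefficient on $\alpha_s$ is the unit $-1$ because the base is an honest $\PP^n$ with all weights equal to $1$); hence the group is $\Z/a_r$ (with $a_0=1$) no matter what gcd conditions the $a_i$ satisfy -- just as every standard patch of an arbitrary weighted projective space, well-formed or not, is already a cyclic quotient. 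What well-formedness actually buys you in this argument is that $\beta_0$ as written is the primitive generator of its ray (this requires $\gcd(a_1,\dots,a_m)=1$), so that Theorem~\ref{fan-rank2} and your lattice-index computation describe the variety itself rather than a stacky refinement of it. With that point repaired, your proof goes through.
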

\begin{proof} We construct the patches $\mathcal{U}_{ij}$ for $0\leq i\leq n$ and $0\leq j\leq m$. Note that at the toric level, $\mathcal{U}_{ij}=(x_iy_j\neq 0)$ corresponds to the maximal cone $\sigma_{ij}$ as in Proposition~\ref{fan-rank2}.

\[\mathcal{U}_{ij}=\Spec\mathbb{C}[x_0,\dots,x_n,y_0,\dots,y_m,x_i^{-1},y_j^{-1}]^{\mathbb{C}^*\times\mathbb{C}^*}\]

Computing the invariants gives
\[\mathcal{U}_{ij}=\Spec\C[\frac{x_0}{x_i},\dots,\frac{x_n}{x_i},\frac{y_0^{a_j}}{y_j}.x_i^{\omega_0a_j-\omega_j},\dots].\]

Again powers of $x_i$ appear to make each term invariant under the action of the first coordinate of $(\C^*)^2$ and each $y_k$ comes with a power that is the first number which is 0 modulo $a_j$. In other words, these invariants are exactly the same as those of $\frac{1}{a_j}(0,\dots,0,1,a_1,\dots,a_n)$.
\end{proof}

\subsection{Blow ups of weighted projective space}\label{bl-up-wps}
Example~\ref{F2} already explained the blow up of the weighted projective space $\PP(1,1,1,2)$ at its singular point. In general the rank two toric variety $T$ (or stack, if not well-formed) defined by the homogeneous coordinate ring $\C[y,x_0,\dots,x_n]$ and the irrelevant ideal $I=(y,x_0,\dots,x_k)\cap(x_{k+1},\dots,x_n)$ and the weight system (indicating the action of $(\C^*)^2$)
\[\left(\begin{array}{ccccccc}
\alpha&0&\dots&0&-b_{k+1}&\dots&-b_n\\
0&a_0&\dots&a_k&a_{k+1}&\dots&a_n\end{array}\right),\]
for $1\leq k\leq n-2$ is the (weighted) blow up of the centre $X:(x_{k+1}=\dots=x_n=0)\cong\PP(a_0,\dots,a_k)\subset\PP(a_0,\dots,a_n)$. Details of this constructions are left to the reader to check. A more complicated situation is explained in the next part and the idea and techniques of the proofs there can be applied to this case.

Let us illustrate our method by running the 2-ray game (using VGIT) for the following example. This shows how one can use these explicit methods to study birational geometry of, for example,  hypersurfaces of weighted projective spaces. In fact this problem was considered by Ryder and was partially solved \cite{Ryder}. Applying our techniques, one can classify all elliptic and $K3$ fibrations on Fano threefold hypersurfaces of index\,$1$, together with explicit projective maps that realise such fibration. This problem, however, was later solved with local methods in\,\cite{vanya-jihun}. 

\begin{ex}\label{elliptic}
Consider a Fano threefold $X$ that is a quasi-smooth and general hypersurface of degree $24$ in $\PP=\PP(1,1,6,8,9)$. In a suitable coordinate system, $X$ is defined by $\{f=x_5^2x_3+x_4^3+x_3^4+\cdots+x_1^{24}\}$ in $\PP$. Suppose $p_5=(0:0:0:0:1)$. The germ $p_5\in\PP$ is of type $1/9(1,1,6,8)$ and the germ $p_5\in X$ is a terminal quotient singularity of type $1/9(1,1,8)$. Consider $T$ the blow up of this point in the above description. $T$ is given by a rank two variety with weight system 
\[\left(\begin{array}{cccccc}
3&0&-2&-6&-1&-1\\
0&9&8&6&1&1
\end{array}\right)\]
with the coordinate system $\C[u,x_5,x_4,x_3,x_2,x_1]$ and the irrelevant ideal $I=(u,x_5)\cap(x_4,x_3,x_2,x_1)$. In other words the blow up is given by $(x_5,x_4u^{\frac{2}{3}},x_3u^2,x_2u^{\frac{1}{3}},x_1u^{\frac{1}{3}})$, in coordinates. The restriction of this toric construction to $X$ is the blow up $\hat{X}\rightarrow X$ with weights $1/3(1,1,2)$ blow up of $p_5\in X$. One can check that $T$ is not well-formed and its well-formed model has the weight system
\[\left(\begin{array}{cccccc}
1&3&2&0&0&0\\
0&9&8&6&1&1
\end{array}\right)\]
with respect to which, the equation of $\hat{X}$ has bi-degree $(6,24)$, with equation $x_3x_5^2+x_4^3+x_3^4u^6+\cdots+u^6x_1^3$. Following the 2-ray game of the ambient space (using techniques of~\cite{BZ}), it is easy to verify that $\hat{X}$ forms a fibration over $\mathbb{P}(1,1,6)$ with elliptic fibres $E_{6}\subset\mathbb{P}(1,2,3)$. 
\end{ex}

\subsection{Blow ups of rank 2 toric varieties}
Now we construct Cox rings of rank 3, obtained by blowing up some (torus invariant) centres in a rank 2 toric variety. Then we try to understand the nature of the maps from these varieties to the rank 2 ones. We do this on weighted bundles over $\PP^1$, i.e., when the coordinate ring is $\C[x_0,x_1,y_0,\dots,y_m]$ with irrelevant ideal $I=(x_0,x_1)\cap(y_0,\dots,y_m)$ and weight system
\[\left(\begin{array}{cccccc}
1&1&-\omega_0&-\omega_1&\dots&-\omega_m\\
0&0&1&a_1&\dots&a_m\end{array}\right),\]
for positive integers $a_1,\dots,a_m$ and non-negative integers $\omega_0,\dots,\omega_m$.

It was shown in Proposition~\ref{singularity-F} that each germ \[p_{rs}=\{x_i=y_j=0; \text{ for }i\neq r , j\neq s\}\in\mathcal{U}_{rs}=\{x_rx_s\neq 0\}\]
 has a cyclic quotient singularity of type $\frac{1}{a_j}(0,1,\dots,a_m)$.
Of course this singularity is not isolated. However, instead of blowing up the orbifold locus, we blow up a closed point. The reason for doing this is that we often want to consider the blow up of some subvarieties of $\F$ only at a particular point, see next section for an illustration. We do this by considering the blow up of the ambient space at this point and restrict our attention to the subvariety under this blow up. 

Fix $k\in\{0,\dots,m\}$ and let $T$ be a rank 3 toric variety defined by 
\begin{enumerate}[(i)]
\item $\Cox(T)=\C[X_0,X_1,Y_0,\dots,Y_m,\xi]$,
\item the irrelevant ideal 
\[J=(X_0,X_1)\cap(Y_0,\dots,Y_m)\cap(\xi,X_1)\cap(\xi,Y_k)\cap(X_0,Y_0,\dots,\hat{Y_k},\dots,Y_m)\quad\text{and}\]
\item the action of $(\C^*)^3$ given by the matrix 
\[\left(\begin{array}{ccccccccccc}
1&1&-\omega_0&-\omega_1&\dots&-\omega_{k-1}&-\omega_k&-\omega_{k+1}&\dots&-\omega_m&0\\
0&0&1&a_1&\dots&a_{k-1}&a_k&a_{k+1}&\dots&a_m&0\\
b_k&0&b_0&b_1&\dots&b_{k-1}&0&b_{k+1}&\dots&b_m&-a_k\end{array}\right),\]\end{enumerate}
where $b_0,\dots,b_m$ are strictly positive integers such that
\[b_i\equiv a_i \mod a_k\, \text{ for $i\neq k$ }\qquad \text{ and }\qquad b_k=ra_k\text{ for some positive integer }r.\]

\begin{prop}\label{irrelevant-ideal} The rank 3 toric stack $T$ constructed above is the blow up of the weighted bundle $\F$ over $\PP^1$ in Definition~\ref{weighted bundle-chap3} at the point $(0:1;0:\cdots :0:1:0\cdots :0)$.\end{prop}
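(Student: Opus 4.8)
The plan is to realise $T$ as the toric variety attached to the star subdivision of the fan of $\F$ at one explicit ray, and then to read off its Cox data by the recipe of Cox used in the proof of Theorem~\ref{fan-rank2}. First I would recall from Theorem~\ref{fan-rank2} (with $n=1$) that $\F=T(\Sigma)$, where $\Sigma$ lives in $N=\Z^{m+1}$ and is generated by the rays $\alpha_0,\alpha_1,\beta_0,\dots,\beta_m$ with $\alpha_0=-\alpha_1+\sum_{i=0}^m\omega_i\beta_i$ and $\beta_0=-\sum_{i=1}^m a_i\beta_i$; under the correspondence of that proof the variables $x_0,x_1$ are attached to $\alpha_0,\alpha_1$ and $y_0,\dots,y_m$ to $\beta_0,\dots,\beta_m$. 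The point $p=(0:1;0:\cdots:1:\cdots:0)$ is the torus-fixed point at which only the coordinates $x_1,y_k$ are non-zero, so it corresponds to the maximal cone $\sigma=\langle\alpha_0,\beta_0,\dots,\widehat{\beta_k},\dots,\beta_m\rangle$, which is simplicial but, by Proposition~\ref{singularity-F}, singular of cyclic quotient type $\tfrac{1}{a_k}(\cdots)$.

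The second step introduces the exceptional ray. Reading the third row of the defining matrix of $T$ as a relation among rays via the exact sequence~(\ref{exact-seq1}), the new variable $\xi$ must be attached to
\[
v=\frac{1}{a_k}\Big(b_k\,\alpha_0+\sum_{\substack{0\le i\le m\\ i\neq k}}b_i\,\beta_i\Big).
\]
Since $b_0,\dots,b_m>0$ and $b_k=ra_k$, every barycentric coefficient of $v$ is positive, so $v$ lies in the interior of $\sigma$; this guarantees that adjoining $v$ performs a genuine (weighted) blow up of the point $p$. The key computation is to verify that $v\in N$: expanding $\alpha_0$ and $\beta_0$ in the basis $\{\beta_1,\dots,\beta_m,\alpha_1\}$, one finds that the $\beta_j$-coefficient of $a_kv$ is congruent modulo $a_k$ to $b_j-a_jb_0$, while its $\alpha_1$- and $\beta_k$-coefficients are $-b_k=-ra_k$ and a multiple of $a_k$. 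The congruences $b_i\equiv a_i\pmod{a_k}$ (with $a_0=1$, the weight of $y_0$) then give $b_j-a_jb_0\equiv a_j-a_j\cdot 1\equiv 0$, so $v$ is a lattice point. Thus the arithmetic hypotheses on the $b_i$ are exactly what is needed for $v\in N$.

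Third, let $\Sigma'$ be the star subdivision of $\Sigma$ at $v$, so that by construction $T(\Sigma')\to T(\Sigma)=\F$ is the blow up of $p$. Its maximal cones are the $m+1$ cones $\langle v\rangle+\tau$, where $\tau$ ranges over the facets of $\sigma$, together with all maximal cones of $\Sigma$ other than $\sigma$. Applying Cox's recipe to $\Sigma'$ exactly as in the proof of Theorem~\ref{fan-rank2}: the rays are $\alpha_0,\alpha_1,\beta_0,\dots,\beta_m,v$, giving $\Cox(T(\Sigma'))=\C[X_0,X_1,Y_0,\dots,Y_m,\xi]$; the three linear relations among these rays are precisely the three rows of the defining matrix of $T$, so after a change of basis by an element of $\Gl(3,\Z)$ the grading agrees with the stated one by Lemma~\ref{gliso}; and determining the minimal non-faces (primitive collections) of $\Sigma'$ recovers the five prime components of $J$. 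Concretely, I would check that $\{\alpha_0,\alpha_1\}$, $\{\beta_0,\dots,\beta_m\}$, $\{v,\alpha_1\}$, $\{v,\beta_k\}$ and $\{\alpha_0,\beta_0,\dots,\widehat{\beta_k},\dots,\beta_m\}$ are exactly the minimal collections of rays that span no cone of $\Sigma'$, matching the five factors of $J$.

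I expect the combinatorial bookkeeping in this last step -- verifying that these five sets, and no others, are the primitive collections of the subdivided fan -- to be the main obstacle, since it requires tracking which subsets of rays remain faces after $\sigma$ is removed and the cones through $v$ are inserted. The only other delicate point is the lattice membership of $v$ in the second step, where the hypotheses $b_i\equiv a_i\pmod{a_k}$ and $b_k=ra_k$ enter; everything else is a direct transcription of the Gale-duality computation already performed for $\F$ in Theorem~\ref{fan-rank2}.
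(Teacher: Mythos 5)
Your proposal follows essentially the same route as the paper: identify the fan of $\F$ from Theorem~\ref{fan-rank2}, read the third row of the weight matrix as the relation defining the new ray interior to the cone of the point $p$, perform the star subdivision, and recover the grading and irrelevant ideal by Cox's recipe. The only differences are cosmetic — you verify the lattice membership of $v$ explicitly where the paper merely asserts it, and you extract the components of $J$ via primitive collections rather than via primary decomposition of the monomial ideal generated by the $x^\sigma$ — so this is the same proof.
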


\begin{proof} By Proposition~\ref{fan-rank2}, the fan associated to $\F$ consists of $1$-dimensional cones $\alpha_0,\alpha_1$ and $\beta_0\dots,\beta_m$ in $N=\Z^{m+1}$ with $2m+2$ maximal cones \[\sigma_{0i}=\left<\alpha_1,\beta_0\dots,\hat{\beta_i},\dots,\beta_m\right>\text{ and }\sigma_{1j}=\left<\alpha_0,\beta_0\dots,\hat{\beta_j},\dots,\beta_m\right>\text{ for }0\leq i,j\leq m.\]
The last row of the defining matrix of $T$ is clearly adding a new ray in the cone $\sigma_{0k}$. In order to recover the Cox ring of the blow up, we choose the basis of the class group to be that of the original weighted bundle together with the new divisor class associated to this new ray. The fact that the generator of this ray is an integral vector in $N$ is guaranteed by the conditions imposed on $b_i$. This implies that $T$ is the blow up of $\mathcal{F}$ at a point if it has the correct irrelevant ideal. We complete the proof by showing the irrelevant ideal of the Cox ring of this toric blow up is precisely the ideal of $T$. This is done by taking the subdivision of $\sigma_{0k}$ and computing the irrelevant ideal of the new fan using the method of~\cite{cox1}, as in the proof of Proposition~\ref{fan-rank2}. The fan of this blow up of $\Sigma$ consists of rays $\alpha_0,\alpha_1,\beta_0,\dots,\beta_m,\gamma$ and maximal cones
\[\sigma^\prime_{ki}=\left<\alpha_0,\beta_0\dots,\hat{\beta_i},\dots,\hat{\beta_k},\dots,\beta_m,\gamma\right>\text{ for }i\neq k\text{ and }\sigma_k^\prime=\left<\beta_0\dots,\hat{\beta_k},\dots,\beta_m,\gamma\right> \]
coming from the subdivision of $\sigma_{0k}$  together with the remaining cones $\sigma_{ij}$. If we associate the new variable $\xi$ to the ray $\gamma$ and $X_0,X_1$ to $\alpha_0,\alpha_1$ and $Y_i$ to $\beta_i$, then the irrelevant ideal of this toric variety is the ideal generated by
\[X_1\cdot Y_i\cdot Y_k\text{ for all }i\neq k,\quad X_0\cdot X_1\cdot Y_k,\quad X_1\cdot Y_i\cdot\xi\text{ for all }i\neq k,\quad X_0\cdot Y_i\cdot\xi\text{ for all }i.\]
The primary decomposition of this ideal is the irrelevant ideal of $T$.
\end{proof}

\section{Birational models of the del Pezzo fibration}\label{proofs}

Our initial model $X$ is defined as a complete intersection of two hypersurfaces in $\F$, where $\F$ is a $\PP^4$-bundle over $\PP^1$. If we denote by $y_0,y_1,x_0,\dots,x_4$ the global coordinates on $\F$ then the Cox ring of $\F$ is $\Cox(\F)=\C[y_0,y_1,x_0,\dots,x_4]$ with weights
\[\left(\begin{array}{ccccccc}1&1&0&-1&-2&-3&-3\\0&0&1&1&1&1&1\end{array}\right).\]
and irrelevant ideal $I=(y_0,y_1)\cap(x_0,\cdots,x_4)$.

\subsection{Cox ring and description of the initial model}
Let $Q_1\!=\!\{f=0\}$ and $Q_2\!=\!\{g=0\}$, where $f\in|\mathcal{O}_\F(-3,2)|$, i.e. $f$ has bi-degree $(-3,2)$, and $g\in|\mathcal{O}_\F(-2,2)|$. Without loss of generality, after some changes of coordinates, we can assume that $f$ has no monomial term $x_0x_4$, using the symmetry between $x_3$ and $x_4$, and similarly $g$ has no $y_0x_0x_3$ or $y_1x_0x_3$, using the term $x_0x_2$ that appears with non-zero coefficient (see Remark\,\ref{singularity}). In other words $Q_1$, as a divisor, is a general member of the linear system $\mathcal{L}$, where $\mathcal{L}$ is generated by monomials according to the following table
\[\begin{array}{c||c|c|c|c}
\deg \text{ of }y_0, y_1\text{ coefficient}&0&1&2&3\\
\hline
&&&&\\
\text{fibre monomials in }f&x_0x_3&x_2^2&x_2x_3&x_3^2\\ 
&x_1x_2&x_1x_3&x_2x_4&x_3x_4\\
&&x_1x_4&&x_4^2
\end{array}\]     

and similarly $Q_2$ is a general member in the linear system $\mathcal{L}'$ generated by
\[\begin{array}{c||c|c|c|c|c}
\deg \text{ of }y_0, y_1\text{ coefficient}&0&1&2&3&4\\
\hline
&&&&&\\
\text{fibre monomials in }g&x_0x_2&x_0x_4&x_2^2&x_2x_3&x_3^2\\
&x_1^2&x_1x_2&x_1x_3&x_2x_4&x_3x_4\\
&&&x_1x_4&&x_4^2
\end{array}\]     

\begin{rmk}\label{singularity} It is easy to see that $\mathcal{F}$ is smooth. An easy computation (using the table above) shows that the base locus of $\mathcal{L}'$ is the line $\{x_4=x_3=x_2=x_1=0\}$, which is isomorphic to $\PP^1_{y_0:y_1}$ (as $x_0\neq 0$ by the irrelevant ideal of $\mathcal{F}$). Hence by Bertini theorem the singular locus of $Q_2$ lies in this line. But every point in this line in the hypersurface $Q_2:\{g=0\}\subset\mathcal{F}$ is smooth if the monomial $x_0x_2$ appear with nonzero coefficient in $g$. Similarly, $f$ defines a hypersurrface in $\{g=0\}\subset\mathcal{F}$, that is $Q_1\cap Q_2$, and its base locus is $\{g=0\}\cap\{x_4=x_3=x_2=0\}\subset\mathcal{F}$, which corresponds to the same line. Appearance of the monomial $x_0x_3$ in $f$ with nonzero coefficient implies the smoothness of $Q_1\cap Q_2$.\end{rmk}
 Hence with the generality assumption on $f$ and $g$ and the guarantee that $x_0x_2$ in $g$ and $x_0x_3$ in $f$ appear with non-zero coefficients we have:

\begin{lm}\label{smooth} A general threefold $X=Q_1\cap Q_2\subset\F$ as above is smooth.\end{lm}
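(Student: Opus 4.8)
The plan is to verify smoothness by a Bertini-type argument organised around the base loci of the two linear systems, exactly as foreshadowed in Remark~\ref{singularity}. First I would treat $Q_2=\{g=0\}$ on its own. The linear system $\mathcal{L}'$ generated by the monomials in the second table has base locus contained in $\{x_1=x_2=x_3=x_4=0\}$, which (because $x_0\neq 0$ is forced by the irrelevant ideal of $\F$) is the section $C\cong\PP^1_{y_0:y_1}$. Away from $C$ the system $\mathcal{L}'$ is base-point-free on the smooth variety $\F$, so by Bertini the general $Q_2$ is smooth off $C$; hence any singularity of $Q_2$ must lie on $C$. Along $C$ one checks directly that the presence of the monomial $x_0x_2$ with nonzero coefficient supplies a nonvanishing partial derivative (in the $x_2$-direction), so every point of $C\cap Q_2$ is a smooth point of $Q_2$. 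This gives smoothness of $Q_2$.

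Next I would restrict to $Q_2$ and cut by $f$ to obtain $X=Q_1\cap Q_2$. Viewing $Q_1|_{Q_2}$ as a member of the linear system $\mathcal{L}$ restricted to the smooth fourfold $Q_2$, the relevant base locus is $\{g=0\}\cap\{x_2=x_3=x_4=0\}$, which again lands on the line $C$ (on $C$ all the fibre monomials of $f$ listed in the first table, being quadratic in the $x_i$ with $i\geq 1$, vanish, while $x_0x_3$ and $x_1x_2$ restrict to zero there too). By Bertini applied on $Q_2$, the general $X$ is smooth away from this locus, so it remains only to examine points of $X$ lying on $C$. There the guaranteed monomial $x_0x_3$ in $f$ provides a nonvanishing derivative in the $x_3$-direction transverse to $C$ inside $Q_2$, which forces $X$ to be smooth at those points as well.

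I would then assemble these two transversality computations: smoothness of $Q_2$ everywhere, plus smoothness of $X$ at the finitely-many (or one-dimensional) candidate locus on $C$, yields that the general complete intersection $X=Q_1\cap Q_2$ is smooth. The normalisations about omitting the monomial $x_0x_4$ from $f$ and the monomials $y_0x_0x_3,\,y_1x_0x_3$ from $g$ (made harmless by the symmetry between $x_3,x_4$ and by the presence of $x_0x_2$) are used only to simplify the explicit derivative bookkeeping and do not affect generality, so I would invoke them as in Remark~\ref{singularity} rather than re-deriving them.

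The main obstacle is the careful handling of the base locus: Bertini only guarantees smoothness on the base-point-free locus, so the crux is the explicit Jacobian check along the section $C$, where both systems fail to be base-point-free. The argument hinges entirely on exhibiting, at each such point, one coordinate monomial ($x_0x_2$ for $g$, and $x_0x_3$ for $f$ transverse to $Q_2$) whose partial derivative does not vanish; verifying that these derivatives are genuinely independent of the restrictions that already define $Q_2$ (i.e. that $x_3$ really is a transverse direction inside $Q_2$ and not tangent to it) is the delicate point, though it follows from inspecting the weights and the Jacobian of $g$ along $C$.
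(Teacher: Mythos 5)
Your argument is correct and follows essentially the same route as the paper's Remark~\ref{singularity}: Bertini applied to the systems $\mathcal{L}'$ on $\F$ and $\mathcal{L}|_{Q_2}$ on $Q_2$ confines any singularities to the base-locus line $C=\{x_1=x_2=x_3=x_4=0\}$, and the guaranteed monomials $x_0x_2$ in $g$ and $x_0x_3$ in $f$ give the nonvanishing partials (indeed a rank-two Jacobian minor in the $x_2,x_3$ columns, since $\partial g/\partial x_3$ vanishes on $C$) that rule out singular points there. No gaps to report.
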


\subsection{Various 2-ray games} Let us begin by explaining the 2-ray game on $\mathcal{F}$. The GIT chambers of $\mathcal{F}$ can be described as

\[\xygraph{
!{(0,0) }="a"
!{(1.2,0) }*+{\scriptstyle{(1,0)}}="b"
!{(0,0.9) }*+{\scriptstyle{(0,1)}}="c"
!{(-1.9,1) }*+{\scriptstyle{(-2,1)}}="d"
!{(-3,1) }*+{\scriptstyle{(-3,1)}}="e"
!{(-0.9,1) }*+{\scriptstyle{(-1,1)}}="f"
"a"-"b"  "a"-"c" "a"-"d" "a"-"e" "a"-"f"}\]
And the 2-ray game, following the VGIT according to this diagram, can be described as
\[\xymatrixcolsep{2pc}\xymatrixrowsep{3pc}
\xymatrix{
&\F\ar_\Phi[ld]\ar^{f_0}[rd]\ar^{\sigma_0}@{-->}[rr]&&\F_1\ar_{g_0}[ld]\ar^{f_1}[rd]\ar^{\sigma_1}@{-->}[rr]&&\F_2\ar_{g_1}[ld]\ar^{f_2}[rd]\ar^{\sigma_2}@{-->}[rr]&&\F_3\ar_{g_2}[ld]\ar^\Psi[rd]&\\
\PP^1_{y_0:y_1}&&\mathfrak{T}_0&&\mathfrak{T}_1&&\mathfrak{T}_2&&\PP^1_{x_3:x_4}
}\]
where the models and the maps are obtained as follows. The varieties in the top row of the diagram (from left to right) correspond to the models constructed as quotient given by a character from the interior of the four chambers in the GIT chambers above (anticlockwise).

In other words, $\F$, $\F_1$, $\F_2$ and $\F_3$ have the same Cox ring and their irrelevant ideals are, in that order,
\[I=(y_0,y_1)\cap(x_0,x_1,x_2,x_3,x_4)\qquad\, I_1=(y_0,y_1,x_0)\cap(x_1,x_2,x_3,x_4)\]
\[I_2=(y_0,y_1,x_0,x_1)\cap(x_2,x_3,x_4)\qquad I_3=(y_0,y_1,x_0,x_1,x_2)\cap(x_3,x_4)\]

Let us demonstrate this for example for $\F_1$. Suppose the chosen character in the interior of the second chamber is $(-1,2)$, then we have that
\[\F_1=\Proj\bigoplus_{n\geq 1}\H^0(\F,n\mathcal{O}_\F(-1,2))=\Proj R_{(-1,2)}\]
where $R_{(-1,2)}$ is generated by monomials of the form
\[\text{(monomial in }y_0,y_1,x_0)\times(\text{monomial in }x_1,x_2,x_3,x_4)\]
so that the total weight is a multiple of $(-1,2)$. Hence the zero graded part, to be removed, corresponds to the ideal $I_1$. 

Similarly, the bottom row of the 2-ray game diagram corresponds to varieties obtained by the one-dimensional rays that separate the GIT chambers (the walls). The two ends of the game are the easiest to calculate. For the other three models we have, for example, that

\[\mathfrak{T}_0=\Proj\bigoplus_{n\geq 1}\H^0(\F,n\mathcal{O}_\F(0,1))=\Proj\C[x_0,y_0x_1,y_1x_1,y_0^2x_2,\dots,y_1^3x_4]\]
and hence the map $f_0$ is given in coordinates by
\[(y_0,y_1,x_0,\dots,x_4)\mapsto(x_0,y_0x_1,y_1x_1,y_0^2x_2,\dots,y_1^3x_4)\]
It is not hard to see that $f_0$ is one-to-one away from the point $p_0=(1:0:\dots:0)\in\mathfrak{T}_0$. On the other hand $f_0^{-1}(p_0)$ is given by the solutions of 
\[y_0x_1=y_1x_1=y_0^2x_2=\dots=y_1^3x_4=0\]
which is the set
\[\{y_0=y_1=0\}\cup\{x_1=\dots=x_4=0\}\]

The set $\{y_0=y_1=0\}$ is a component of $I$, and hence $f_0^{-1}(p_0)=\{x_1=\dots=x_4=0\}\subset\F$, which is isomorphic to $\PP^1$. We say that $f_0$ contracts this line to $p_0$. Similarly $g_0$ contracts the locus $\{y_0=y_1=0\}$, which is isomorphic to $\PP(1,2,3,3)$, to $p_0$. This could also be viewed by restricting our observation to the open set $U:(x_0\neq 0)\subset\mathfrak{T}_0$, which is
\[U=\Spec\C[y_0x_1,y_1x_1,y_0^2x_2,\dots,y_1^3x_4]\]
Note that $p_0$ corresponds to $O\!\in\!U$. Setting $x_0=1$ we can remove the action of the second component of $(\C^*)^2$, i.e., the action is $(1,1,-1,-2,-3,-3)$. Following the notation of \cite{reid} $\sigma_0\colon\F\dashrightarrow\F_1$ is an anti-flip of type
$(1,1,-1,-2,-3,-3)$, that is, a $\PP^1\subset\F$ is replaces by a $\PP(1,2,3,3)\subset\F_1$.

Similarly $\sigma_1\colon\F_1\dashrightarrow\F_2$ is an anti-flip of type $(1,1,1,-1,-2,-2)$, and it replaces $\PP^2\subset\F_1$ by $\PP(1,2,2)\subset\F_2$. Let us explain where these numbers came from. Ideally, we would like to set $x_1$ (the variable at the base of the flip or anti-flip) to be $1$ and remove the second component of the action of $(\C^*)^2$. This can be done if this variable has no effect on the action of the first component (exactly like the previous case for $x_0$). But this can be done by multiplying the matrix of weights of the Cox ring of $\F$ by
\[\left(\begin{array}{cc}1&1\\0&1\end{array}\right)\in\Gl(2,\Z)\]
and recovering the anti-flip type $(1,1,1,-1,-2,-2)$. By anti-flip we mean the inverse of a flip. Whether such map is a flip or anti-flip, or a flop, depends on the sign of the sum of the numbers in the description. For example, it can be seen similarly that $\sigma_2$ is a flip of type $(1,1,2,1,-1,-1)$ as the sum of these numbers is positive. The structure of the other maps in the 2-ray game can be verified similarly, and we leave the details to the reader.

\subsubsection{The restriction of the 2-ray game of $\F$ to $Q_1$}
Let us recall that the fourfold $Q_1$ is defined as the zero locus of the polynomial $f=x_0x_3+x_1x_2+\dots$ in $\F$. The restriction of $\sigma_0$ to $Q_1$ is an anti-flip of type $(1,1,-1,-2,-3)$. This can be seen by observing that in the loci $x_0\neq 0$ (in either side of the anti-flip of $\sigma_0$) $f$ has a linear term $x_3$, and therefore this variable can be eliminated, hence the anti-flip above, which we denote by $\sigma_0^1\colon Q_1\dashrightarrow Q_1^1$, where $Q_1^1$ is the image of $Q_1$ under $\sigma_0$. Similarly $\sigma_1$ restricted to $Q_1^1$ is an anti-flip $\sigma_1^1\colon Q_1^1\dashrightarrow Q_1^2$ of type $(1,1,1,-2,-2)$ as the variable $x_2$ can be eliminated by setting $x_1\neq 0$. Similarly we have a flip $\sigma_2^1\colon Q_1^2\dashrightarrow Q_1^3$ of type $(1,1,2,-1,-1)$. The restriction of $\Psi$ to $Q_1^3$ is a fibration over $\PP^1$ with fibres given by a degree $3$ polynomial in $\PP(1,1,1,2,3)$. A general fibre (as long as $x_3\neq 0$) is isomorphic to $\PP(1,1,1,2)$.

\subsubsection{The restriction of the 2-ray game of $Q_1$ to $X$}\label{X-to-Y}
Similar to the previous case, we restrict the 2-ray game of $Q_1$ to $X$. We have the following diagram.

\[\xymatrixcolsep{2pc}\xymatrixrowsep{3pc}
\xymatrix{
&X\ar_\varphi[ld]\ar[rd]\ar^{\sigma^2_0:(1,1,-1,-3)}@{-->}[rr]&&X_1\ar[ld]\ar[rd]\ar^{\sigma^2_1:\text{ isomorphism}}@{->}[rr]&&X_2\ar[ld]\ar[rd]\ar^{\sigma^2_2:(1,1,-1,-1)}@{-->}[rr]&&Y\ar[ld]\ar^\psi[rd]&\\
\PP^1_{y_0:y_1}&&Z_0&&Z_1&&Z_2&&\PP^1_{x_3:x_4}
}\]
The map $\sigma_0^2$ is the restriction of $\sigma_0^1$ to $X$. Because of the appearance of the monomial $x_0x_2$ with nonzero coefficient in $g$, the variable $x_2$ can be eliminated near the flipped locus. Similarly, for $\sigma_2^2$ the variable $x_0$ is eliminated to get $(1,1,-1,-1)$. The flip $\sigma_1^1$ happens away from $X_1$, as the monomial $x_1^2$ in $g$. Note that $(3,1,-1,-1)$ is a terminal flip (see~\cite{danilov, gavin}) and $\sigma_2^2$ is the Atiyah flop, so all models $X$, $X_1$, $X_2$ and $X_3$ have terminal singularities. The fibres of $\varphi$ are intersections of two quadrics in $\PP^4$, hence $X$ is a del Pezzo surface of degree $4$, and fibres of $\psi$ are quartic surfaces in $\PP(1,1,1,2)$, that is, del Pezzo surfaces of degree $2$.

The threefold $Y$ is defined by $\{f=g=0\}$ in the toric variety $\F_3$ with $\Cox(\F_3)=\C[u,v,x,y,z,t,s]$, with weights
\[\left(\begin{array}{ccccccc}1&1&0&-1&-2&-1&-1\\0&0&1&2&3&1&1\end{array}\right),\]
and irrelevant ideal $I_Y=(u,v)\cap(x,y,z,t,s)$. 

Note that, for simplicity, we have renamed the variables, i.e., the variables $u,v,x,y,z,t,s$ are exactly $x_4,x_3,x_2,x_1,x_0,y_0,y_1$ (in that order). The weight matrix is that of $\F$ in opposite order, in rays and columns, multiplied by a matrix 
\[\left(\begin{array}{cc}-2&-1\\3&1\end{array}\right)\in\Sl(2,\Z).\]
The equations of $f$ and $g$ must be easily read after the substitution, in particular, 
\[f=0.vz+uz+x^2t+\cdots\quad\text{ and }\quad g=0.vzt+0.vzs+y^2+xz+\cdots\]
with bi-degrees $(-1,3)$ and $(-2,4)$, with respect to the new weights. By Proposition~\ref{singularity-F}, $\F_1$, the ambient toric variety in which $Y$ is embedded, has two lines of singularities of types $\A^1\times 1/2(1,1,1,1)$ and $\A^1\times 1/3(1,1,1,2)$. It is easy to check that $Y$ does not meet the first line and it intersects the second line at the point $p_{vz}=(u=x=y=t=s=0)$. In particular, $p_{vz}$ is a terminal singularity of type $1/3(1,1,2)$, as one can eliminate the variables $u$ and $x$ in an analytical neighbourhood of this point using the local descriptions of $f$ and $g$.

\begin{theorem} The threefold $X$ is a del Pezzo fibration of degree $4$ and it is birational to a del Pezzo fibration of degree $2$ (over $\PP^1$).
\end{theorem}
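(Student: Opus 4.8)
The plan is to exhibit an explicit birational map $X\dashrightarrow Y$ realising the del Pezzo degree drop, by running the restricted 2-ray game that has already been set up in Subsection~\ref{X-to-Y}. The statement packages two assertions: first, that $\varphi\colon X\to\PP^1_{y_0:y_1}$ is a del Pezzo fibration of degree $4$; second, that $X$ is birational to the del Pezzo fibration $\psi\colon Y\to\PP^1_{x_3:x_4}$ of degree $2$. The first assertion is essentially immediate from the construction: the fibre of $\varphi$ over a general point of $\PP^1$ is cut out in $\PP^4$ (the fibre of $\F$) by the restrictions of $f$ and $g$, which have fibre bi-degrees giving two quadrics, so a general fibre is a smooth complete intersection of two quadrics in $\PP^4$, i.e.\ a del Pezzo surface of degree $4$. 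Combined with Lemma~\ref{smooth} (smoothness of $X$) and the fact that $\rho(X/\PP^1)=1$ (inherited from the rank-$2$ ambient Cox structure and the complete intersection), this gives the Mori fibre space / degree $4$ claim.

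For the birational equivalence, I would chain together the three maps $\sigma_0^2$, $\sigma_1^2$, $\sigma_2^2$ displayed in the diagram of Subsection~\ref{X-to-Y}. Each arises as the restriction to $X$ (respectively to its successive birational models $X_1,X_2$) of the corresponding ambient VGIT wall-crossing $\sigma_i$ on $\F$, and the elimination argument already sketched identifies their local types: $\sigma_0^2$ is an anti-flip of type $(1,1,-1,-3)$, obtained because $x_0x_2$ occurs in $g$ with nonzero coefficient so that $x_2$ can be solved for near the anti-flipped locus; $\sigma_1^2$ is an isomorphism on $X$ because the relevant flip $\sigma_1$ on $\F$ takes place along the locus $x_1^2$ which is disjoint from $X_1$; and $\sigma_2^2$ is the Atiyah flop of type $(1,1,-1,-1)$, eliminating $x_0$. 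Since each of these is an isomorphism in codimension one away from centres disjoint from the generic fibre, their composite $X\dashrightarrow Y$ is a birational map, and one reads off from the Cox data of $\F_3$ that $\psi\colon Y\to\PP^1_{x_3:x_4}$ has general fibre a degree $3$ hypersurface in $\PP(1,1,1,2,3)$ that restricts on the generic fibre to a quartic in $\PP(1,1,1,2)$, i.e.\ a del Pezzo surface of degree $2$.

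The main technical point to nail down is that $X$ (and hence $Y$) genuinely has \emph{terminal} $\Q$-factorial singularities throughout the game, so that every intermediate $X_i$ is a legitimate output and the maps really are flips/flops/anti-flips in the Mori-theoretic sense rather than arbitrary birational modifications. For this I would invoke the classification of terminal cyclic quotient singularities and terminal flips: the type $(3,1,-1,-1)$ appearing in $X_1$ is a terminal flip by \cite{danilov, gavin}, and $\sigma_2^2$ being an Atiyah flop preserves terminality, so all of $X,X_1,X_2,Y$ have terminal singularities. The only genuinely singular point introduced is the terminal quotient singularity $p_{vz}$ of type $\frac13(1,1,2)$ on $Y$, whose terminality follows from the local analytic elimination of $u$ and $x$ using the explicit forms of $f$ and $g$ given at the end of Subsection~\ref{X-to-Y}. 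The hardest step is verifying that these local elimination arguments are valid simultaneously in suitable analytic neighbourhoods so that the local types are exactly as claimed; this is the place where the generality hypotheses on $f,g$ (the guaranteed appearance of $x_0x_2$ in $g$ and $x_0x_3$ in $f$, Remark~\ref{singularity}) are essential, and I would make that dependence explicit rather than treat it as routine.
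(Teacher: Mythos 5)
Your decomposition of the statement and your reliance on the restricted 2-ray game of Subsection~\ref{X-to-Y} match the paper's route: the paper's proof likewise takes the chain $\sigma_0^2$, $\sigma_1^2$, $\sigma_2^2$ as already constructed and regards the birational part as done. The genuine gap is in your first half. You dispose of $\rho(X/\PP^1)=1$ as ``inherited from the rank-$2$ ambient Cox structure and the complete intersection,'' but this is precisely the one point the paper singles out as still requiring proof, and it does not follow formally from $X$ being a complete intersection in a rank-$2$ toric variety. The Grothendieck--Lefschetz argument that yields $\Pic(X)\cong\Z^2$ needs $X$ to be an ample divisor in $Q_1$ (after $\Pic(Q_1)\cong\Z^2$ has been obtained from $Q_1$ being ample on $\F_2$ and the codimension-one isomorphisms $\F\dashrightarrow\F_2$ and $Q_1\dashrightarrow Q_1^2$), and the paper explicitly observes that $X$ is \emph{not} ample on $Q_1$, nor is any model of $X$ isomorphic to it in codimension one ample on any of the $Q_1^i$. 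The repair is that $X$ is nef and big on the two intermediate models $Q_1^1$ and $Q_1^2$, and one then invokes a Lefschetz-type statement for nef and big divisors as in \cite[\S4.3]{Ahm}. Without this step you have not established that $X\rightarrow\PP^1$ is a Mori fibre space, only that its general fibre is a quartic del Pezzo surface.

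The remainder of your proposal --- the general fibre of $\varphi$ as an intersection of two quadrics in $\PP^4$, the local types $(1,1,-1,-3)$ and $(1,1,-1,-1)$, the terminality checks via \cite{danilov, gavin}, and the $\frac{1}{3}(1,1,2)$ point $p_{vz}$ on $Y$ --- is consistent with what the paper sets up in Subsection~\ref{X-to-Y}, and your insistence on tracking the role of the monomials $x_0x_2$ in $g$ and $x_0x_3$ in $f$ is a fair reading of Remark~\ref{singularity} and Lemma~\ref{smooth}. But those are inputs the paper establishes \emph{before} the theorem; the content of the theorem's own proof is the Picard rank computation, and that is the piece your argument is missing.
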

\begin{proof} How the birational map is obtained was already explained above in\,\ref{X-to-Y}. The only part left to prove is that $\Pic(X)\cong\Z^2$. Note that $Q_1$ is an ample divisor on $\F_2$. We know that $\Pic(\F)\cong\Z^2$. It follows from Lefschetz hyperplane theorem and the fact that $\F$ and $\F_2$, and similarly $Q_1$ and $Q_1^2$, are isomorphic in codimension 1 that $\Pic(Q_1)\cong\Z^2$.  The same argument will not work instantly for $X$ as it is not ample on $Q_1$, and neither is any of the other models isomorphic to $X$ in codimension 1 ample on any of the $Q_1^i$s. However, $X$ is nef and big for two models $Q_1^1$ and $Q_1^2$, and this is enough to use Lefschetz type argument as explained in similar cases in \cite[\S4.3]{Ahm} in order to obtain $\Pic(X)\cong\Z^2$.
\end{proof}

From the construction of the 2-ray games above it is clear that $X$ is a Mori dream space. In fact, varieties $X$, $X_1$, $X_2$ and $Y$ are all birational models isomorphic in codimension $1$ and as the Picard number is $2$ it follows that the boundaries of the mobile cone, of $X$ or the other models, correspond to the divisors that give the two fibrations over the projective line (from $X$ and $Y$). The following conjecture is a very natural consequence of this phenomenon.

\begin{conj} The Cox ring of the threefold  Mori dream space $X$ is $\Cox(X)=\dfrac{\Cox(\F)}{(f,g)}$.\end{conj}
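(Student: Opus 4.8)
The plan is to realise $R=\Cox(\F)/(f,g)$ as the total coordinate ring of $X$ by verifying the standard recognition criteria for Cox rings of subvarieties of Mori dream spaces (as in \cite{hu,Hausen}), exploiting that $\Cox(\F)=\C[y_0,y_1,x_0,\dots,x_4]$ is a polynomial ring and that $X$ is a \emph{smooth} complete intersection. Write $G=(\C^*)^2$ for the grading torus acting via the weight matrix of $\F$, and let $\hat X=V(f,g)\subset\C^7=\Spec\Cox(\F)$ be the affine cone, a $G$-invariant subscheme with $(\hat X\setminus V(I))\qt G=X$. The strategy is to show that $\hat X$ is an irreducible normal variety whose only homogeneous units are constants, and that the restriction $\Cl(\F)\to\Cl(X)$ is an isomorphism; the recognition theorem then identifies $R=\C[\hat X]$ with $\Cox(X)$.

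First I would pin down the grading. Since $\F$ is smooth toric we have $\Cl(\F)=\Pic(\F)\cong\Z^2$, and by Theorem\,\ref{main-theorem} together with smoothness (Lemma\,\ref{smooth}) also $\Cl(X)=\Pic(X)\cong\Z^2$. I would then show the restriction map $\Cl(\F)\to\Cl(X)$ is an isomorphism: both groups are free of rank two, so it suffices to check that the images of the generators $M$ and $L$ form a basis of $\Pic(X)$. This follows from the Lefschetz-type argument already used for the models $Q_1^1,Q_1^2$ in the proof of Theorem\,\ref{main-theorem} (compare \cite[\S4.3]{Ahm}), together with an intersection-number computation on a del Pezzo fibre confirming that the restriction is unimodular.

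Next I would establish that $(f,g)$ is prime and that $R$ is normal. For primality, $f,g$ form a regular sequence of the expected codimension, so $R$ is Cohen--Macaulay and unmixed of dimension $5$; irreducibility of $\hat X$ follows because the good quotient map $\hat X\setminus V(I)\to X$ has irreducible ($G$-orbit) fibres over the irreducible base $X$, giving a dense irreducible open subset, which together with unmixedness forces $\hat X$ to be irreducible. Normality is then checked by Serre's criterion: $S_2$ holds as $\hat X$ is a complete intersection, so only $R_1$ remains, i.e.\ one must show $\dim\Sing(\hat X)\le 3$. Over the locus where $G$ acts with finite stabilisers and away from $V(I)$, the smoothness of $X$ forces $\hat X$ to be smooth, so $\Sing(\hat X)$ is contained in the union of $\hat X\cap V(I)$ with the loci where the $G$-action degenerates; I would bound each such stratum by a direct Jacobian analysis of $(f,g)$, using the guaranteed appearance of $x_0x_3$ in $f$ and of $x_0x_2$ in $g$ recorded in the tables of Section\,\ref{proofs} (precisely the genericity exploited in Remark\,\ref{singularity}).

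Finally I would verify the remaining ring-theoretic hypotheses and conclude. Because $\Cl(X)\cong\Z^2$ is free, the target $\Cox(X)$ is a genuine UFD; to match this I would show that $R$ is $\Cl(X)$-factorial and that $R^\ast=\C^\ast$, the latter following from the grading being pointed with $R_{(0,0)}=\C$. The factoriality I would deduce by reducing it, for a normal complete intersection, to the vanishing $\Cl(\hat X)=0$, approached via a Grothendieck--Lefschetz/excision argument fed by the codimension bound of the previous step. With the class-group isomorphism, the normal $\Cl(X)$-factorial domain structure on $R$, and the fact that $X$ meets the torus orbits of $\F$ properly, the recognition criterion of \cite{Hausen} yields $\Cox(X)\cong R=\Cox(\F)/(f,g)$. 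I expect the genuine obstacle to be the $R_1$/codimension estimate of the third step: controlling the singularities of the affine cone $\hat X\subset\C^7$ along the coordinate strata meeting $V(I)$ is exactly where the full force of the genericity of $f$ and $g$ must be brought to bear, and where a naive dimension count can fail.
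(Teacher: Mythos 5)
First, a point of order: the statement you are proving is stated in the paper as a \emph{Conjecture}, and the paper contains no proof of it. The author only offers indirect evidence, namely that the 2-ray game exhibits $X$, $X_1$, $X_2$ and $Y$ as the small modifications and that the two ends of the game give the two fibrations bounding the mobile cone. So there is no argument of the paper to compare yours against; your proposal has to be judged on its own terms, as an attempt to settle an open statement.

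On its own terms, your strategy --- verifying the recognition criterion for Cox rings of complete intersections inside a Mori dream space --- is the natural one, but as written it is a programme with at least one step that would fail as proposed. The blocked step is factoriality: you propose to deduce that $R=\Cox(\F)/(f,g)$ is factorial from $\Cl(\hat X)=0$ via a Grothendieck--Lefschetz/parafactoriality argument for the affine complete intersection $\hat X=V(f,g)\subset\C^7$. But every monomial of $f$ and of $g$ lies in $(x_0,\dots,x_4)^2$ (see the two tables in Section~\ref{proofs}), so $\hat X$ contains the plane $\{x_0=\cdots=x_4=0\}\cong\C^2$ and the Jacobian of $(f,g)$ vanishes identically along it; hence $\Sing(\hat X)$ contains a stratum of codimension exactly $3$ in the five-fold $\hat X$. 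Grothendieck's factoriality theorem for complete intersections requires the singular locus to have codimension at least $4$, so this route is closed. The repair is to use a version of the recognition criterion in which graded factoriality of the quotient is a \emph{conclusion} (derived from $\Cl(\F)\to\Cl(X)$ being an isomorphism together with normality and the small-complement condition), not a hypothesis you must verify on $\hat X$ directly. Beyond that, the remaining key inputs are promised rather than delivered: (i) the isomorphism $\Cl(\F)\to\Cl(X)$ needs an actual argument (rank $2$ is established in the paper, but surjectivity/unimodularity of the restriction is an additional Noether--Lefschetz-type statement); (ii) normality of $\hat X$ requires the explicit Jacobian analysis along the coordinate strata, including the non-reduced intersection $\hat X\cap\{x_2=x_3=x_4=0\}=\{x_1^2=0\}$ where Bertini gives nothing; and (iii) the condition $\mathrm{codim}_{\hat X}\bigl(\hat X\cap V(I)\bigr)\geq 2$ must be checked separately from $R_1$ --- it does hold here ($\hat X\cap V(y_0,y_1)$ has codimension $2$ and $\hat X\cap V(x_0,\dots,x_4)$ has codimension $3$), but it is a hypothesis of the criterion, not a by-product of normality. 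Until these are carried out, the conjecture remains open.
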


By rules of Sarkisov program, see \cite{Corti1} \S2.2, there are two possibilities for the start of the program: having a Mori fibre space $X\rightarrow S$, either run one step of the MMP on $S$, obtain $S\dashrightarrow T$, and consider $\Pic(X/T)$ or do a blow up on $X$, obtain $Z\rightarrow X$, and consider $\Pic(Z/S)$. In both cases the relative Picard group has rank 2. One generator of this group corresponds to $Z\rightarrow X$ or $S\dashrightarrow T$ and the other generator indicates the beginning of the so-called 2-ray game, and in correct setting the Sarkisov link. See \cite{Corti1} for details. What we did on $X$ (similarly on $Q$ and $\F$) is of the first type. Other examples can be found in~\cite{BCZ, Ahm}. In Subsection~\ref{rank3} we perform Sarkisov links of the other type, i.e., starting with a blow up.

\subsection{2-ray game starting by a blow up}\label{rank3}

We want to show that a blow up of the point $p_{vz}$ is the start of a Sarkisov link on $Y$. By  \cite{kawamata}, in order to remain in the Mori category, this blow up is the unique weighted blow up of type $(1,1,2)$ with discrepancy $\frac{1}{3}$; this is called the {\it Kawamata blow up} of the point $p_{vz}$~\cite[Definition~3.4.2]{CPR}. In other words, if we denote the blow up of $Y$ by $\mathcal{Y}$, and the exceptional divisor of the blow up by $E$, then
\[K_\mathcal{Y}=K_Y+\frac{1}{3}E\]

Define a rank 3 toric Cox ring by $\mathcal{R}=\C[u,v,x,y,z,t,s,w]$, the irrelevant ideal $\mathcal{I}=(u,v)\cap(x,y,z,t,s)\cap(u,x,y,t,s)\cap(w,v)\cap(w,z)$ and weights given by the matrix
\[\mathcal{A}=\left(\begin{array}{cccccccc}
1&1&0&-1&-2&-1&-1&0\\0&0&1&2&3&1&1&0\\3&0&a&2&0&1&1&-3
\end{array}\right).\]
By Proposition~\ref{irrelevant-ideal}, $\mathcal{T}=(\mathcal{I},\mathcal{A})$ is a blow up of $\mathcal{F}_3$, for a positive integer $a=3k+1$. Note that in terms of the fan structure of $\mathcal{T}$ we have added the ray $r$ to the fan of $\F_3$, that is the ray $r$ so that
\[3r=3r_u+ar_x+2r_y+r_t+r_s\]
where every ray in the fan of $\F_3$ is indexed by the corresponding variable from the Cox ring.

The aim is to show that for a particular $k$, the proper transform of $Y$ under this blow up ($\mathcal{Y}$) is the Kawamata blow up of $Y$ at $p_{vz}$ and the relative 2-ray game of $\mathcal{T}$ restricts to a relative 2-ray game on $\mathcal{Y}$.

Note that the blow up map is given by
\[\pi((u,v,x,y,z,t,s,w))\mapsto(uw,v,w^\frac{a}{3}x,w^\frac{2}{3}y,z,w^\frac{1}{3}t,w^\frac{1}{3}s)\]

The well-formed model of $\mathcal{A}$ is (denoted by $\mathcal{A}$ again)
\[\mathcal{A}=\left(\begin{array}{cccccccc}
1&1&0&-1&-2&-1&-1&0\\
0&0&1&2&3&1&1&0\\
1&0&k&0&-1&0&0&-1
\end{array}\right).\]

\begin{lm} For $a=4$, $\mathcal{Y}\longrightarrow Y$ is the unique Kawamata blow up of the point $p_{vz}\in Y$, where $\mathcal{Y}\subset\mathcal{T}$ is the birational transform of $Y$ by $\varphi$.\end{lm}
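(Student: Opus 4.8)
The plan is to verify two things: first, that for the specific value $a=4$ the toric blow-up $\mathcal{T}=(\mathcal{I},\mathcal{A})$ restricts on the birational transform $\mathcal{Y}$ of $Y$ to a genuine weighted blow-up of the point $p_{vz}$ of type $(1,1,2)$, and second, that the discrepancy of the resulting exceptional divisor is $\frac{1}{3}$, so that by the uniqueness statement of Kawamata \cite{kawamata} the map $\mathcal{Y}\to Y$ must be \emph{the} Kawamata blow-up. I would begin by recalling from the end of Subsection~\ref{X-to-Y} that near $p_{vz}=(u=x=y=t=s=0)$ the variables $u$ and $x$ can be eliminated analytically using the local equations of $f$ and $g$, so that $p_{vz}\in Y$ is a cyclic quotient singularity of type $\frac{1}{3}(1,1,2)$ in the remaining local coordinates; this identifies the relevant local orbifold chart and fixes the weights $(1,1,2)$ against which any Kawamata-type blow-up must be measured.

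First I would compute, directly from the matrix $\mathcal{A}$ (in its well-formed form) together with Proposition~\ref{irrelevant-ideal}, how the newly added ray $r$ satisfying $3r=3r_u+a r_x+2r_y+r_t+r_s$ acts on the local chart at $p_{vz}$. Restricting the toric blow-up map $\pi$ to $Y$ and reading off the induced weighting on the local coordinates, the exceptional behaviour on $\mathcal{Y}$ is governed by the fibre weights $(w^{a/3},w^{2/3},w^{1/3},w^{1/3})$ attached to $(x,y,t,s)$ (with $z$ and $v$ untouched near $p_{vz}$). Since $u$ and $x$ are precisely the variables eliminated in the local analytic model of $Y$, the induced blow-up on $\mathcal{Y}$ sees only the weights on the surviving local coordinates. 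The point is to check that these surviving weights reduce exactly to $\frac{1}{3}(1,1,2)$, and this is where the choice $a=4$ enters: $a=3k+1$ with $k=1$ makes the exponent $a/3$ land correctly so that after eliminating $u,x$ the remaining weighting matches the orbifold structure of $p_{vz}$ rather than producing a weighted blow-up of a different type or one with the wrong discrepancy.

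Next I would compute the discrepancy. The cleanest route is to use the adjunction/discrepancy formula directly from the toric data: the discrepancy of the divisor $E$ corresponding to the ray $r$ is determined by expressing the canonical class of $\mathcal{Y}$ in terms of that of $Y$ and $E$, which at the toric level is an arithmetic computation with the entries of $\mathcal{A}$ and the relation $3r=3r_u+4r_x+2r_y+r_t+r_s$. The expected outcome is $K_{\mathcal{Y}}=\pi^{*}K_Y+\frac{1}{3}E$, matching the displayed equation $K_\mathcal{Y}=K_Y+\frac{1}{3}E$ stated above. Once the weights $(1,1,2)$ and the discrepancy $\frac{1}{3}$ are both confirmed, the uniqueness clause in Kawamata's theorem forces $\mathcal{Y}\to Y$ to coincide with the Kawamata blow-up, and $a=4$ is singled out as the only value in the family $a=3k+1$ producing this.

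The main obstacle I anticipate is the bookkeeping in the elimination step: one must be careful that eliminating $u$ and $x$ near $p_{vz}$ is legitimate \emph{after} the blow-up as well as before it, i.e. that the local analytic isomorphism used to identify $p_{vz}\in Y$ with $\frac{1}{3}(1,1,2)$ lifts compatibly to $\mathcal{Y}$ so that the weight on the exceptional divisor is read off in the correct coordinates. If the elimination changed the weighting in a subtle way, a different $a$ might appear to work, so the heart of the argument is confirming that with $a=4$ the surviving local coordinates carry precisely the orbifold weights of the singularity. The remaining verifications — that $\mathcal{Y}$ is the birational transform rather than containing spurious exceptional components, and that the discrepancy arithmetic yields $\frac{1}{3}$ — are routine computations with the matrix $\mathcal{A}$ and the irrelevant ideal $\mathcal{I}$, using Proposition~\ref{irrelevant-ideal} to guarantee that $\mathcal{T}$ is genuinely the blow-up of $\mathcal{F}_3$ at the toric point.
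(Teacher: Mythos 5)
Your overall strategy -- pin down the discrepancy of the exceptional divisor and invoke the uniqueness in Kawamata's theorem -- is the same as the paper's, but the step that is supposed to single out $a=4$ does not actually do so, and the discrepancy computation you sketch would, if taken literally, compute the wrong quantity. First, the local-weights heuristic in your second paragraph fails: the variables $u$ and $x$ are exactly the ones eliminated analytically near $p_{vz}$, so the surviving local coordinates $y,t,s$ receive the weights $\tfrac{2}{3},\tfrac{1}{3},\tfrac{1}{3}$ from $\pi$ \emph{for every} admissible $a=3k+1$; the exponent $a/3$ sits on the eliminated variable $x$ and never appears among the surviving weights. So ``matching the orbifold weights $\tfrac{1}{3}(1,1,2)$'' cannot distinguish $a=4$ from $a=7$, etc. Second, the ``arithmetic computation with the entries of $\mathcal{A}$ and the relation $3r=3r_u+ar_x+2r_y+r_t+r_s$'' gives the discrepancy of $E$ over the ambient fivefold $\F_3$ (namely $\tfrac{1}{3}(3+a+2+1+1)-1$), not over the threefold $Y$; the two differ by the multiplicities of $f$ and $g$ along $E$, and it is precisely this correction that depends on $a$.

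What closes the gap, and what the paper does, is adjunction for the complete intersection: since $\deg\tilde f=(-1,3,0)$ and $\deg\tilde g=(-2,4,0)$ with respect to the well-formed $\mathcal{A}$, one gets $-K_{\mathcal{Y}}=(-K_{\mathcal{T}}-\tilde f-\tilde g)|_{\mathcal{Y}}\sim\mathcal{O}_{\mathcal{Y}}(0,1,k-1)\sim D_x+E$, where $D_x=(x=0)$ has degree $(0,1,k)$ and $E=(w=0)$ has degree $(0,0,-1)$. Comparing with $\pi^*(-K_Y)\sim D_x+\frac{a}{3}E$ (because $x\mapsto w^{a/3}x$) yields $K_{\mathcal{Y}}=\pi^*K_Y+\bigl(\tfrac{a}{3}-1\bigr)E$, and the Kawamata condition $\tfrac{a}{3}-1=\tfrac{1}{3}$ forces $a=4$. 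Without this adjunction step your argument never produces an equation in $a$, so the claim that $a=4$ is ``the only value in the family'' is asserted rather than proved.
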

\begin{proof} Since $Y$ is a complete intersection in $T$, we have that $-K_Y=(-K_T-Y)|_Y$, and similar formula for $\mathcal{Y}\subset\mathcal{T}$. If we denote by $\tilde{f}$ and $\tilde{g}$ the defining equations of $\mathcal{Y}\subset\mathcal{T}$, then $\deg\tilde{f}=(-1,3,0)$ and $\deg\tilde{g}=(-2,4,0)$. In particular, 
\[-K_Y\sim\mathcal{O}_Y((0,1))\quad\text{and}\quad
-K_\mathcal{Y}\sim\mathcal{O}_\mathcal{Y}((0,1,k-1))\]
In other words, $K_\mathcal{Y}\sim -D_x-E$, where $D_x=(x=0)$ is a principal divisor on $\mathcal{Y}$ and $E=(w=0)$ is the exceptional divisor. On the other hand, $\pi^*(K_Y)\sim -D_x-\frac{a}{3}E$. Using Kawamata's condition that the discrepancy is equal to $\frac{1}{3}$, we conclude that $a=4$ (and $k=1$).
\end{proof}

\subsubsection*{\bf The fan of $\mathcal{T}$} The description of the fan of $\F_3$ is as follows. It has $7$ 1-dimensional cones (rays) generated by 
\[r_u=( 1,  0,  0,  0,  0),  r_v=( 0,  1,  0,  0,  0), r_x=( 0,  0,  1,  0,  0)\]
\[r_y=( 0,  0,  0,  1,  0), r_z=(-1, -1, -1, -1,  0), r_t=( 0,  0,  0,  0,  1), r_s=( 3,  3,  2,  1, -1)\]

in $\Z^5$ and ten maximal cones
 \[ C_{vs}= \left<r_u, r_x, r_y,r_z,r_t\right>,\quad \dots \quad ,C_{ux}=\left<r_v,r_y,r_z,r_t,r_s\right>\]
 
 The fan of $\mathcal{T}$ then is obtained by adding the ray $\left<r_w\right>$ such that 
 \[r_u+r_x=r_z+r_w\]
 that is the point $(2,1,2,1,0)$, which is indeed 
 \[ \frac{1}{3}(6,3,6,3,0)=\frac{1}{3}(3r_u+4r_x+2r_y+r_s+r_t).\]
Hence it is in the interior of the cone $C_{vz}$. The maximal cones of the fan of $\mathcal{T}$ are obtained from those of $\F_3$ with $C_{vz}$ replaced by its subdivision through $r_w$.

\begin{lm} $Y$ is square birational to a degree 2 del Pezzo model embedded in a toric variety as a hypersurface.\end{lm}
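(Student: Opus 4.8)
The plan is to realize the required birational map as a type $\II$ Sarkisov link over $\PP^1$: blow up $p_{vz}$ as in the previous lemma, run the relative 2-ray game of the rank $3$ toric ambient $\mathcal{T}$, and show that the far end of the game is a divisorial contraction onto the desired degree $2$ del Pezzo model. First I would fix the fibration structure: the composite $\mathcal{Y}\to Y\xrightarrow{\psi}\PP^1$ endows $\mathcal{Y}$ with relative Picard number $2$ over $\PP^1$, because $\mathcal{Y}\to Y$ is a single Kawamata blow up of the degree $2$ del Pezzo fibration $Y\to\PP^1$. Playing the 2-ray game relative to this fibration amounts to running relative VGIT for the $(\C^*)^3$-action given by the well-formed matrix $\mathcal{A}$ (with $k=1$): the relevant chamber decomposition lives in the $2$-dimensional space $N^1(\mathcal{T}/\PP^1)_\R$, exactly as in the rank $2$ pictures of Example~\ref{F2} and Subsection~\ref{X-to-Y}, and walking through its chambers realises the relative 2-ray game of $\mathcal{T}$. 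Each interior wall corresponds to a flip, flop or antiflip of $\mathcal{T}$, whose type is read off from the columns of $\mathcal{A}$ following the recipe of \cite{BZ}, while the two boundary walls of the relative movable cone give the two divisorial contractions bounding the link.

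Next I would restrict the game to $\mathcal{Y}=\{\tilde f=\tilde g=0\}$. For each intermediate toric modification I would check, using the explicit shapes $\tilde f=uz+x^2t+\cdots$ and $\tilde g=y^2+xz+\cdots$ together with the primary decomposition of the irrelevant ideal from Proposition~\ref{irrelevant-ideal}, that the toric flipping or flopping locus meets $\mathcal{Y}$ in codimension at least two inside $\mathcal{Y}$. This guarantees that the induced map on $\mathcal{Y}$ is again small, so the composite $\mathcal{Y}\dashrightarrow\mathcal{Y}'$ is an isomorphism in codimension one, and (together with the terminality already established for $Y$ and for the Kawamata blow up) that every intermediate model remains in the Mori category.

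At the opposite boundary wall the game defines a divisorial contraction $\mathcal{T}'\to T'$ of the toric variety; restricting it to $\mathcal{Y}'$ contracts the birational transform $E'$ of the exceptional divisor $E=(w=0)$ and yields $\mathcal{Y}'\to Y'$. Computing the Cox data of $T'$ as a rank $2$ toric variety (a bundle over $\PP^1$ whose fibre is $\PP(1,1,1,2)$) shows that one of the two equations $\tilde f,\tilde g$ becomes a variable elimination on this model, so that $Y'$ is cut out in $T'$ by the single remaining equation. Its generic fibre is then a quartic in $\PP(1,1,1,2)$, i.e.\ a del Pezzo surface of degree $2$, and the induced $\psi'\colon Y'\to\PP^1$ is a Mori fibre space. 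Finally I would verify squareness: by construction the entire game is performed over $\PP^1$, so $Y\dashrightarrow Y'$ commutes with $\psi$ and $\psi'$ over the identity of the base, and since all intermediate modifications occur over finitely many points of $\PP^1$, the map induced on the generic fibre is a biregular isomorphism of degree $2$ del Pezzo surfaces, which is precisely square birationality.

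The main obstacle I expect is the restriction step: controlling, through the explicit equations $\tilde f$ and $\tilde g$ and the generality of $f,g$, that every toric flip or flop restricts to a \emph{small} modification of $\mathcal{Y}$, and that the terminal boundary contraction is genuinely divisorial onto a $\Q$-factorial del Pezzo fibration rather than a degenerate or higher-dimensional-image model. This is exactly where the precise form of the irrelevant ideal $\mathcal{I}$ and the leading terms $uz$ of $\tilde f$ and $xz$ of $\tilde g$ must be used.
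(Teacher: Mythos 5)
Your overall strategy is the paper's: blow up $p_{vz}$, run the relative 2-ray game of the rank~3 toric ambient $\mathcal{T}$ over $\PP^1$ (a flop followed by a divisorial contraction), restrict to $\mathcal{Y}$ using the explicit equations, and read off the new model at the far end. The restriction step you worry about is handled in the paper exactly as you suggest: the toric wall crossing is a single flop of type $(1,1,-1,-1)$ over $\PP(1,1,2)$, and on $\mathcal{Y}$ it restricts to a flop of the $6$ lines over the points cut out by $\tilde f$ and $\tilde g$ on that $\PP(1,1,2)$, so the map is small on $\mathcal{Y}$.

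There is, however, a genuine error in your description of the far end of the link, and it matters for the conclusion. The divisorial contraction $\mathcal{T}'\to\F'$ does \emph{not} contract the birational transform of the exceptional divisor $E=(w=0)$; it contracts $E'=(u=0)$, the transform of a fibre of $Y\to\PP^1_{u:v}$, to the point $p_{vx}\in\F'$, while $w$ survives and becomes a coordinate on the \emph{new} base: the resulting fibration is $Y'\to\PP^1_{v:w}$ (squareness then comes from the birational identification $(uw:v)\leftrightarrow(w:v)$ of the two bases, not from working over a fixed $\PP^1$ as you assert). If the transform of $E$ were contracted, the link would return a model isomorphic in codimension one to $Y$ over the same base, and your subsequent claim that ``one of the two equations becomes a variable elimination'' would have no mechanism behind it. In the actual geometry the mechanism is concrete: after the contraction the transform of $f$ becomes $\hat f=z+wxy+\cdots$, i.e.\ it acquires a \emph{linear} term in the weight-$(−1,3)$ variable $z$ of $\F'$ (whose fibre is $\PP(1,1,1,2,3)$, not $\PP(1,1,1,2)$ as you state), so $z$ can be eliminated globally and $Y'=\{\hat g=0\}$ sits as a hypersurface in the residual $\PP(1,1,1,2)$-bundle, with quartic fibres, i.e.\ degree~2 del Pezzo surfaces. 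You should redo the $\SL(3,\Z)$ normalisation of $\mathcal{A}$ that exhibits the last row as the contraction of $(u=0)$ before asserting what the second extremal contraction does.
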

\begin{proof} Let us first consider the toric 2-ray game corresponding to the relative Picard number $\rank\Pic(\mathcal{T})-\rank\Pic(\PP^1_{u:v})=2$. Using toric MMP we can see that this 2-ray game on $\mathcal{T}$ consists of a flop to $\mathcal{T}^\prime$ followed by a divisorial contraction to a rank 2 toric variety $\F'$. Let us explain this in detail.

As we blew up a point in a fibre of the fibration (over $\PP^1$) in $\F_3$, we can turn our attention to the open set $U_v:(u\neq 0)\subset\PP^1_{u:v}$. By doing so, and removing the action of the first component of $(\C^*)^3$, we have a rank two toric variety $\mathcal{T}_v$ with irrelevant ideal $\mathcal{I}_v=(u,x,t,s,t)\cap(z,w)$ and grading
\[\left(\begin{array}{ccccccc}
0&1&1&1&2&3&0\\
1&1&0&0&0&-1&-1
\end{array}\right)\]
where the columns (from left to right) indicate the weight of the variables $u,x,t,s,y,z,w$. The 2-ray game now follows by a flop of type $(1,1,-1,-1)$ over $\PP(1,1,2)$, that is, a $\PP^1$-bundle over $\PP(1,1,2)$ (the locus $\{u=x=0\}$) contracts to the base $\PP(1,1,2)$ on one side of the flop and on the other side there is another $\PP^1$-bundle (the locus $\{z=w=0\}$), over the same base, is extracted. Note that every time we ran the 2-ray game before it was ``from left to right'' with respect to our matrix of weights but this time it was from ``right to left''. 

In terms of Cox ring of $\mathcal{T}'$, this flop is the replacement of $\mathcal{I}$ by another ideal $\mathcal{I^\prime}=(u,v)\cap(w,v)\cap(u,x)\cap(w,y,z,t,s)\cap(x,y,z,t,s)$. 

 We can rewrite $\mathcal{A}$, after a $\Sl(3,\Z)$ modification, as
\[\mathcal{A}=\left(\begin{array}{cccccccc}
1&1&0&-1&-2&-1&-1&0\\0&0&1&2&3&1&1&0\\1&0&0&-2&-4&-1&-1&-1
\end{array}\right).\]
This is done to see (the weights for) the divisorial contraction on $\mathcal{T}'$; indicated by the last row.
By Proposition~\ref{irrelevant-ideal}, $\pi'\!\colon\!\mathcal{T'}\!\rightarrow\!\F'$ is a divisorial contraction with exceptional divisor $E'\!=\!(u=0)$ to a point, where $\Cox(\F^\prime)=\C[w,v,z,y,x,t,s]$, with irrelevant ideal $I'=(w,v)\cap(z,y,x,t,s)$ and the weights
\[\mathcal{A'}=\left(\begin{array}{ccccccc}
1&1&-1&-1&-1&-1&-1\\0&0&3&2&1&1&1
\end{array}\right)\]
for the variables $w,v,z,y,x,t,s$, in that order from left to right on the columns of $\mathcal{A'}$.

Note that in terms of the fan structure, we have just removed the ray $r_u$ that is interior to the cone $C_{vx}$; this can be seen from the relation in the last row of $\mathcal{A}$.

The equation of this contraction can be written as
\[\pi'((u,v,x,y,z,t,s,w))\mapsto(uw,v,u^4z,u^2y,x,ut,us),\]
in particular, the image of the contraction is the point $p_{vx}\in\F'$.

Now let us restrict this game to $\mathcal{Y}$. We see that the toric flop of the ambient space restricts to a flop of 6 lines. This is because the two equations $\tilde{f}$ and $\tilde{g}$ restricted to the base of the flop, the $\PP(1,1,2)$, define a cubic and a quartic equation, respectively, which is $6$ points \cite[Lemma~9.5]{fle} (the base of the three dimensional flop).

Setting $(u=0)$, to recover the three dimensional exceptional divisor of the contraction, we can set $u\neq 0$ and $x\neq 0$ (using $\mathcal{I}'$). After appropriate elimination using $xz$ in $\tilde{g}$ we observe that the exceptional divisor is a cubic (what remain from $\tilde{f}$ after setting $u=0$ and $v=x=1$) in $\PP(1,1,2)$. The birational transforms of $f$ and $g$ in $\F'$ are, respectively 
\[\hat{f}=z+wxy+vyt+\cdots\quad\text{ and }\quad \hat{g}=y^2+xz+\cdots\]
In particular, we can eliminate $z$ globally, and consider $Y'$, the birational transform of $Y$, embedded as a hypersurface in a rank 2 toric variety with Cox ring equal to that of $\F'$, with $z$ removed. The threefold $Y'$ is the vanishing of the polynomial $y^2=vyxt+w^2t^4+\cdots$. It is easy to check that $Y'$ is a fibration of degree 2 del Pezzo surfaces over $\PP^1_{v:w}$, and is square birational to $Y$, over $\PP^1$.
\end{proof}

\begin{lm} $Y'$ is birational to a conic bundle.\end{lm}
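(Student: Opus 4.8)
The plan is to exhibit an explicit projective morphism from $Y'$ (or a model square birational to it) onto $\PP^2$ whose generic fibre is a conic, thereby realising $Y'$ as a conic bundle. The tool is again the relative $2$-ray game, but now run with respect to the other ruling of the rank $2$ ambient toric variety $\F'$ carrying $Y'$. Recall that $Y'$ was produced as the hypersurface $\{y^2=vyxt+w^2t^4+\cdots\}$ in the rank $2$ toric variety with $\Cox=\C[w,v,x,y,t,s]$ (the coordinate $z$ having been eliminated) and weight matrix obtained from $\mathcal{A'}$ by deleting the $z$-column, namely
\[\left(\begin{array}{cccccc}
1&1&-1&-1&-1&-1\\
0&0&2&1&1&1
\end{array}\right)\]
for the variables $w,v,y,x,t,s$ in that order. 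Reading the fibration $Y'\to\PP^1_{v:w}$ (which I have just established is a degree $2$ del Pezzo fibration) corresponds to one end of the $2$-ray game of this ambient space; the opposite end is the projection onto the weighted projective space $\PP(1,1,1)\cong\PP^2$ with homogeneous coordinates $x,t,s$. So the first step is to identify this opposite end of the VGIT chamber structure as a $\PP^2$, exactly as was done for $\mathfrak{T}_0$ and the endpoints of the earlier games in Subsection~\ref{X-to-Y}.

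The key steps, in order, are as follows. First I would write down the GIT chamber decomposition of the rank $2$ ambient toric variety for $Y'$ and identify the wall/endpoint characters, using the technique illustrated in Example~\ref{F2} and in the $2$-ray game diagram for $\F$; the endpoint opposite to $\PP^1_{v:w}$ gives a contraction $Y'\to\PP^2$ by the coordinates of weight $(-1,1)$ with the variables $x,t,s$ surviving. Second, I would restrict this toric morphism to $Y'$ and compute the generic fibre: over a general point of $\PP^2$ the variables $x,t,s$ are fixed (up to the residual scaling) and the defining equation $y^2=vyxt+w^2t^4+\cdots$ becomes, after absorbing the degree-$1$ terms, a single quadratic relation in the remaining variables $w,v,y$, i.e. a conic in a $\PP^2$-fibre. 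Third, I would check that $-K_{Y'}$ is relatively ample and that the relative Picard number over $\PP^2$ is one, so that this is genuinely a Mori conic bundle and not merely a quadric fibration; this is where Lemma-type arguments on the Picard group (as in the proof that $\Pic(X)\cong\Z^2$, via the Lefschetz-type reasoning cited from \cite[\S4.3]{Ahm}) together with the $2$-ray game structure guarantee $\rho(Y'/\PP^2)=1$.

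The main obstacle I anticipate is verifying that the fibres really are conics of the expected type and that the discriminant behaves well, rather than the fibration degenerating to something lower-dimensional or the total space acquiring worse-than-terminal singularities along the centre of the contraction. Concretely, the delicate point is the elimination: one must confirm that on the generic fibre the equation of $Y'$ is genuinely quadratic (degree $2$ in the fibre coordinates $w,v,y$) and irreducible, which requires tracking how the terms $vyxt$, $w^2t^4$ and the omitted monomials specialise when $x,t,s$ are generic; the monomial $y^2$ guarantees the leading quadratic part is nondegenerate, analogous to how the monomials $x_0x_2$ in $g$ and $x_0x_3$ in $f$ secured smoothness in Remark~\ref{singularity}. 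I would handle this exactly as in the earlier games, by passing to an affine chart (say $x\neq 0$ or $s\neq 0$), removing one factor of the torus, and reading off the fibre equation directly; generality of $f,g$ then ensures the conic bundle structure, and the existence of this contraction to $\PP^2$ completes the identification of $Y'$ as a conic bundle and hence provides the final link in Diagram~\ref{easy-diag}.
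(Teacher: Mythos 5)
Your proposal is correct, but it takes a genuinely different route from the paper: the paper's entire proof of this lemma is a citation (``this appears as Family~6 in Theorem~3.3 of \cite{Ahm}''), whereas you reconstruct the conic bundle structure directly from the VGIT of the rank~$2$ ambient space of $Y'$. Your computation checks out: after eliminating $z$, the ambient has weight columns $(1,0)$ for $w,v$, $(-1,2)$ for $y$ and $(-1,1)$ for $x,t,s$, so the mobile cone is spanned by $(1,0)$ and $(-1,1)$, the latter endpoint giving $\Proj\C[x,t,s]\cong\PP^2$; and since every monomial of bidegree $(-2,4)$ in these variables satisfies $\deg_w+\deg_v+\deg_y=2$, the equation of $Y'$ restricts on each fibre $\PP^2_{w:v:y}$ (on which the residual $\C^*$ acts with weights $(1,1,1)$) to a genuine conic, nondegenerate for general $f,g$ because of the monomials $y^2$ and $w^2t^4$. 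The one step you gloss over is the interior wall at the ray $(-1,2)$ spanned by $\deg y$, which must be crossed before the $\PP^2$ endpoint becomes a morphism: the associated small modification exchanges the locus $\{x=t=s=0\}$ for $\{w=v=0\}$, and one should observe that $Y'$ meets neither (the only monomial of bidegree $(-2,4)$ supported on either locus is $y^2$, forcing $y=0$ there, which is excluded by the respective irrelevant ideals), so the crossing restricts to an isomorphism on $Y'$ --- consistent with the $\simeq$ in Diagram~\ref{main-diag} --- and the conic bundle structure lives on $Y'$ itself rather than merely on a birational model. With that remark added, your argument is self-contained and buys an explicit description of the conic bundle (including its discriminant, if desired) that the paper's citation does not display; the cost is that you must also supply the $\rho(Y'/\PP^2)=1$ and relative ampleness checks that the cited classification already packages.
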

\begin{proof} This appears as Family~6 in Theorem~3.3 of~\cite{Ahm}.\end{proof}
Diagram\,\ref{main-diag} captures the geometry of $X$ and its birational models constructed in this section.

{\small
\begin{table}[h]\centering 

\xymatrixcolsep{4pc}\xymatrixrowsep{4pc}
\xymatrix{
&&&\mathcal{Y}\ar_{\text{blow up}}[ld]\ar@{-->}^{6\times(1,1,-1,-1)}[r]&\mathcal{Y}^\prime\ar^{\text{blow up}}[rd]&&\\
X\ar@{-->}^{(1,1,-1,-3)}[r]\ar_{dP_4}[d]&X_1\ar@{-->}^{(1,1,-1,-1)}[r]&Y\ar^{dP_2}[rd]&&&Y'\ar_{dP_2}[ld]\ar^{\simeq}[r]&Y'\ar_{\text{conic}}^{\text{ bundle}}[d]\\
\PP^1&&&\PP^1\ar@{-->}^{\simeq}[r]&\PP^1&&\PP^2
}
\caption{}\label{main-diag}
\end{table} 
}

In order to complete the proof of Theorem~\ref{main-theorem} we need to show that $X$ is not rational. To obtain this, we use the result of Alexeev~\cite{alexeev}, whose proof relies on Shokurov's work \cite{Shokurov} on nonrationality of standard conic bundles over $\mathbb{F}_n$.

\begin{theorem}[\cite{alexeev}, Theorem~2]\label{alexeev} If the Euler characteristic of a (standard) del Pezzo fibration of degree 4 over $\PP^1$ does not belong to $\{0,-4,-8\}$, then it is not rational.
\end{theorem}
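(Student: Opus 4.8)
The plan is to reduce the claim to Shokurov's nonrationality criterion for standard conic bundles, accessed through the intermediate Jacobian. First I would exploit the fact that a smooth del Pezzo surface of degree $4$ is a complete intersection of two quadrics in $\PP^4$ and carries conic bundle structures (each of its five pencils of conics gives one). Carrying out this construction over the generic point of $\PP^1$ and then spreading out, I would produce a birational model of $X$ that is a conic bundle $\pi\colon V\to S$ over a rational ruled surface $S$ lying over $\PP^1$; after running a relative MMP on $V/S$ and resolving, I would arrange $\pi$ to be a \emph{standard} conic bundle, that is, $V$ smooth, $\rho(V/S)=1$, and discriminant curve $C\subset S$ reduced with at worst nodes, together with its associated \'etale double cover $\widetilde{C}\to C$.

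Next I would record the numerical data. The topological Euler characteristic of the standard del Pezzo fibration is not a birational invariant, but it pins down the degeneration data of the fibration, and hence, after the reduction above, the class of the discriminant curve $C$ in $S$: counting degenerate conic fibres yields a relation of the shape $\chi(V)=2\chi(S)+\chi(C)$, so that the hypothesis $\chi\notin\{0,-4,-8\}$ forces $C$ to be large enough that the adjoint system $|2K_S+C|$ is nonempty. The three excluded values are precisely the borderline cases in which $2K_S+C$ fails to be effective; in those cases the conic bundle may be rational and the criterion gives no information.

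Finally I would invoke Shokurov's theorem: for a standard conic bundle over a minimal rational surface $S$ with $|2K_S+C|\neq\emptyset$, the intermediate Jacobian $J(V)$, which coincides with the Prym variety $\operatorname{Prym}(\widetilde{C}/C)$, is not isomorphic as a principally polarised abelian variety to a product of Jacobians of smooth curves. By the Clemens--Griffiths criterion this obstructs rationality of $V$, and since $V$ is birational to $X$, the threefold $X$ is not rational. The main obstacle is the middle step: producing a genuinely \emph{standard} model and controlling the class of $C$ precisely enough to deduce effectivity of $2K_S+C$ from the Euler characteristic alone. This requires careful bookkeeping of the degenerate, and especially the non-reduced, fibres introduced during the birational reduction, and it is exactly here that the threshold values $\{0,-4,-8\}$ emerge.
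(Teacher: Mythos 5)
The paper offers no proof of this statement: it is imported verbatim from Alexeev (Theorem~2 of \cite{alexeev}), with only the remark that its proof rests on Shokurov's nonrationality criterion for standard conic bundles over $\mathbb{F}_n$. Your sketch reconstructs exactly that route --- pass to a conic bundle model over a rational ruled surface via the pencils of conics on the degree-$4$ del Pezzo fibres, translate the Euler characteristic into the class of the discriminant curve $C$, and apply Shokurov's Prym/Clemens--Griffiths obstruction when $|2K_S+C|\neq\emptyset$ --- so it is consistent with the argument the paper points to, and you rightly flag that the substantive work (producing a genuinely standard model and pinning down $C$ precisely enough that the excluded values $\{0,-4,-8\}$ emerge) is the part your outline leaves open and for which one must consult \cite{alexeev} and \cite{Shokurov}.
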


Let us recall that a fibration is standard if it has Picard number two and all the fibres are normal. Both these conditions are satisfied with the models considered in this article, as verified above.

\begin{lm} $X$ is not rational.\end{lm}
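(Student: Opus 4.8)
The plan is to apply Theorem~\ref{alexeev} directly, so the entire task reduces to computing the topological Euler characteristic $\chi(X)$ and verifying that it does not lie in the excluded set $\{0,-4,-8\}$. First I would record that $X$ is indeed a \emph{standard} del Pezzo fibration of degree $4$ over $\PP^1$: the Picard number is two by Theorem~\ref{main-theorem}(1), and the normality of all fibres was established in Remark~\ref{singularity} and Lemma~\ref{smooth} together with the analysis of the special fibres in Subsection~\ref{X-to-Y}. With the hypotheses of Alexeev's theorem in place, the only remaining content is the numerical computation.

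To compute $\chi(X)$ I would exploit the complete-intersection structure $X=Q_1\cap Q_2\subset\F$, where $\F=\Proj_{\PP^1}\mathcal{E}$ is the smooth projective bundle of Theorem~\ref{main-theorem}. The cleanest route is via Chern classes: since $X$ is a smooth complete intersection of the two divisors $Q_1\in|-3L+2M|$ and $Q_2\in|-2L+2M|$ in the smooth fivefold $\F$, the adjunction/normal-bundle sequence gives $c(T_X)=c(T_\F)|_X\big/\bigl(c(\mathcal{O}(Q_1))\,c(\mathcal{O}(Q_2))\bigr)|_X$, and $\chi(X)=\int_X c_3(T_X)$. Concretely one pushes forward to $\F$ and integrates $c_3(T_X)\cdot[Q_1]\cdot[Q_2]$ against the fundamental class of $\F$, using the standard relations in $H^*(\F)$ generated by $M$ (tautological class) and $L$ (fibre class), namely $L^2=0$ and the degree-five monomial in $M,L$ determined by the splitting type $\mathcal{E}=\mathcal{O}\oplus\mathcal{O}(1)\oplus\mathcal{O}(2)\oplus\mathcal{O}(3)\oplus\mathcal{O}(3)$. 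The total Chern class of $T_\F$ follows from the relative Euler sequence of the $\PP^4$-bundle twisted appropriately. This is a finite, mechanical intersection-theory calculation on a toric variety whose cohomology ring is completely explicit.

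Alternatively, and perhaps more transparently given that the whole paper is phrased in terms of fibrations, I would compute $\chi(X)$ fibrewise. A generic fibre of $\varphi\colon X\to\PP^1$ is a smooth degree-$4$ del Pezzo surface, which is $\PP^2$ blown up at five points, so $\chi=8$. One then subtracts a correction coming from the singular fibres: if $\Delta\subset\PP^1$ is the discriminant locus and each degenerate fibre $X_b$ contributes $\chi(X_b)-8$, then $\chi(X)=8\cdot\chi(\PP^1)+\sum_{b\in\Delta}\bigl(\chi(X_b)-8\bigr)=16+\sum_{b\in\Delta}\bigl(\chi(X_b)-8\bigr)$. This requires identifying the number and type of singular fibres of a general such pencil of quadric-intersections, which is again governed by the degree of the relevant discriminant divisor and is computable from the bidegrees of $f$ and $g$.

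The main obstacle will be the bookkeeping in whichever Chern-class computation I carry out: getting the normalisation of $M$, $L$, and the ring structure of $H^*(\F)$ exactly right, and not dropping terms in $c_3$. The fibrewise approach shifts the difficulty onto an accurate count of the singular fibres (equivalently the degree of the discriminant), which one must argue carefully for \emph{general} $f,g$ so as to ensure all degenerate fibres are of the simplest admissible type. Either way the conceptual step is trivial — once $\chi(X)$ is a definite integer one simply checks it avoids $\{0,-4,-8\}$ and invokes Theorem~\ref{alexeev}; the only thing that can go wrong is an arithmetic slip, so I would cross-check the two methods against each other to be safe.
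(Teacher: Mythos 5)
Your proposal follows essentially the same route as the paper: the paper's proof simply cites Alexeev's Theorem~\ref{alexeev} after computing $\chi(X)=-28$ by exactly the Chern-class/complete-intersection calculation you describe (carried out ``as in Fulton, Example~3.2.11''), having verified the standardness hypotheses just beforehand. The only difference is that you stop short of producing the actual value $-28$, but the method is identical and the verification that $-28\notin\{0,-4,-8\}$ is immediate.
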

\begin{proof} By the same calculation as \cite{ful}~Example~3.2.11 we can compute $\chi(X)=-28$, the Euler characteristic of $X$, see\,\cite{shramov} for many similar cases. The result follows from Theorem~\ref{alexeev}.\end{proof}

\bibliographystyle{amsplain}
\bibliography{bib}

\vspace{0.01cm}

Radon Institute, Austrian Academy of Sciences

Altenberger Str. 69, A-4040 Linz, Austria

e-mail: \url{hamid.ahmadinezhad@oeaw.ac.at}

\end{document}